\newtheorem{theorem}{Theorem}
\newtheorem{lemma}[theorem]{Lemma}
\newtheorem{corollary}[theorem]{Corollary}
\newtheorem{example}[theorem]{Example}
\newtheorem{question}{Question}
\newcommand{\dom}{\text{dom}}
\title{$G_\delta$ covers of compact spaces}
\author{Santi Spadaro}
\address{
Department of Mathematics and Computer Science \\
University of Catania \\
Citt\'a universitaria\\  
viale A. Doria 6 \\
95125 Catania, Italy}
\email{santidspadaro@gmail.com}
\author{Paul Szeptycki}
\address{Department of Mathematics\\
Faculty of Science and Engineering\\
York University\\
Toronto, ON,  M3J 1P3 Canada}
\email{szeptyck@yorku.ca}
\subjclass[2000]{Primary: 54A25, Secondary: 54D20, 54G20}
\keywords{cardinal function, $G_\delta$-cover, weak Lindel\"of number, homogeneous space}
\begin{document}

\maketitle

\begin{abstract} 
We solve a long standing question due to Arhangel'skii by constructing a compact space which has a $G_\delta$ cover with no continuum-sized ($G_\delta$)-dense subcollection. We also prove that in a countably compact weakly Lindel\"of normal space of countable tightness, every $G_\delta$ cover has a $\mathfrak{c}$-sized subcollection with a $G_\delta$-dense union and that in a Lindel\"of space with a base of multiplicity continuum, every $G_\delta$ cover has a continuum sized subcover. We finally apply our results to obtain a bound on the cardinality of homogeneous spaces which refines De La Vega's celebrated theorem on the cardinality of homogeneous compacta of countable tightness.
\end{abstract}

\section{Introduction}  Alexandroff and Urysohn asked, in 1923, if the continuum is a bound on the cardinality of compact Hausdorff first-countable spaces. The celebrated solution by Arhangel'skii \cite{A1} established that the cardinality of any Hausdorff space is bounded by a function of the Lindel\"of degree and character, namely we have the inequality $|X|\leq 2^{\chi(X)L(X)}$. This result was improved in many directions and some still outstanding open problems guide ongoing research. Much of the work in this area is concerned with establishing similar bounds on the cardinality of $X$ from more general cardinal invariants obtained by weakening the Lindel\"of degree and character in conjunction with perhaps strengthening the separation axioms. For example, the bound of Arhangel'skii-Sapirovskii that for $T_2$ spaces $|X|\leq 2^{\psi(X)t(X)L(X)}$ and the Bell-Ginsburg-Woods inequality for normal spaces that $|X|\leq 2^{\chi(X)wL(X)}$ (see \cite{BGW}) are in this spirit. Recall that a space is said to be weakly Lindel\"of if for each open cover ${\mathcal U}$ there is a countable ${\mathcal V}\subseteq {\mathcal U}$ such that $\bigcup{\mathcal V}$ is dense and $wL(X)$ (the weak Lindel\"of number of $X$) is defined as the minimum cardinal $\kappa$ such that every open cover has a subcollection of cardinality $\leq \kappa$ with dense union. See \cite{H} for a more detailed survey on Arhangel'skii's solution and subsequent research. 

Two questions attributed to Arhangels'kii (see \cite{J} and \cite{FW} for published references to these questions)  go in a completely different direction, asking about cardinal invariants for the $G_\delta$ topology. Recall that given a space $X$ we denote by $X_\delta$ the topology with the same underlying set $X$, generated by the $G_\delta$ subsets of $X$

\begin{question}\label{QA} Let $X$ be compact $T_2$.
\begin{enumerate} 
\item Is $L(X_\delta)\leq 2^{\aleph_0}$?
\item Is $wL(X_\delta)\leq 2^{\aleph_0}$?
\end{enumerate}
\end{question}

A positive solution to either question would also answer the Alexandroff-Urysohn question. Indeed, for a first countable compactum $X$, the space $X_\delta$ would be discrete, and in this case both the Lindel\"of and weak Lindel\"of degree coincide with the cardinality of $X$. 

A negative answer to the first question has been known for some time. For example, Mycielski proved that if $\kappa$ is less than the first inaccessible, then $e(\omega^\kappa)=\kappa$ \cite{M}. Recall that $e(X)$ denotes the {\em extent of $X$}, that is the supremum of cardinalities of closed discrete subsets of $X$. Hence $L(\omega^\kappa)=\kappa$ if $\kappa$ is less than the first inaccessible. Therefore, if one considers $X=(2^\omega)^\kappa$, then it follows that $L(X_\delta)=\kappa$ as well. Gorelic has similar results for a larger class of cardinals $\kappa$, including that  $e(\omega^{2^\kappa})=\kappa$ if $\kappa$ is less than the first measurable cardinal \cite{G}. As a result, we have that the Lindel\"of degree of compacta under the $G_\delta$ topology can be arbitrarily large below the first measurable. 

\begin{question} Is the first measurable a bound on the Lindel\"of degree of compacta under the $G_\delta$ topology?
\end{question}

However, Question $\ref{QA}$, (2) has remained open until now.

There has been a fair amount of work in a positive direction on Arhangel'skii's questions. For example, Juh\'asz proved in \cite{J} that $wL(X_\delta)\leq 2^{\aleph_0}$ for every compact ccc space $X$, using the Erd\"os-Rado theorem. The first-named author generalized this in \cite{S} to prove that $wL(X_\delta) \leq 2^{\aleph_0}$ for all spaces $X$ such that player II has a winning strategy in $G^{\omega_1}(\mathcal{O}, \mathcal{O}_D)$, that is the two-player game in $\omega_1$ many innings where at inning $\alpha<\omega_1$, player one chooses a maximal family of non-empty pairwise disjoint open sets $\mathcal{U}_\alpha$ and player two chooses $U_\alpha \in \mathcal{U}_\alpha$ and player two wins if $\bigcup \{U_\alpha: \alpha < \omega_1 \}$ is dense in $X$. In \cite{FW}, Fleischmann and Williams proved that $L(X_\delta) \leq 2^{\aleph_0}$ for every compact linearly ordered space $X$ and Pytkeev proved in \cite{P} that $L(X_\delta)\leq 2^{\aleph_0}$ for every compact countably tight space $X$. Carlson, Porter and Ridderbos generalized the latter result in \cite{CPR} by proving that $L(X_\delta) \leq 2^{F(X) t(X) L(X)}$, where $F(X)$ is the supremum of the cardinalities of the free sequences in $X$. This is actually an improvement only in the non-compact realm, since for compact spaces $F(X)=t(X)$.

In Section 2 we answer Arhangel'skii's question by constructing a compact subspace of $(2^\omega)^{{\mathfrak c}^+}$ and a $G_\delta$ cover with no ${\mathfrak c}$ sized subcollection with dense union in the $G_\delta$ topology.

In Section 3 we provide a few more positive result about covering properties of the $G_\delta$ topology. In particular we prove that in a countably compact weakly Lindel\"of normal space of countable tightness, every $G_\delta$ cover has a $\mathfrak{c}$-sized subcollection with a $G_\delta$-dense union and that in a Lindel\"of space with a base of multiplicity continuum, every $G_\delta$ cover has a continuum sized subcover. 

In Section 4 we apply one of the results from section 3 to extend De la Vega's theorem on the cardinality of homogeneous compacta to the realm of countably compact spaces.

In our proofs we will often use elementary submodels of the structure $(H(\mu), \epsilon)$. Dow's survey \cite{D} is enough to read our paper, and we give a brief informal refresher here. Recall that $H(\mu)$ is the set of all sets whose transitive closure has cardinality smaller than $\mu$. When $\mu$ is regular uncountable, $H(\mu)$ is known to satisfy all axioms of set theory, except the power set axiom. We say, informally, that a formula is satisfied by a set $S$ if it is true when all existential quantifiers are restricted to $S$. A set $M \subset H(\mu)$ is said to be an elementary submodel of $H(\mu)$ (and we write $M \prec H(\mu)$) if a formula with parameters in $M$ is satisfied by $H(\mu)$ if and only if it is satisfied by $M$. 

The downward L\"owenheim-Skolem theorem guarantees that for every $S \subset H(\mu)$, there is an elementary submodel $M \prec H(\mu)$ such that $|M| \leq |S| \cdot \omega$ and $S \subset M$. This theorem is sufficient for many applications, but it is often useful (especially in cardinal bounds for topological spaces) to have the following closure property. We say that $M$ is \emph{$\kappa$-closed} if for every $S \subset M$ such that $|S| \leq \kappa$ we have $S \in M$. For large enough regular $\mu$ and for every countable set $S \subset H(\mu)$ there is always a $\kappa$-closed elementary submodel $M \prec H(\mu)$ such that $|M|=2^{\kappa}$ and $S \subset M$.

The following theorem is also used often: let $M \prec H(\mu)$ such that $\kappa + 1 \subset M$ and $S \in M$ be such that $|S| \leq \kappa$. Then $S \subset M$.

All spaces are assumed to be Hausdorff. Undefined notions can be found in \cite{Ku} for set theory and \cite{E} for topology. However, our notation regarding cardinal functions follows Juh\'asz's book \cite{Ju}.

\section{Counterexamples to Arhangel'skii's questions}

We first give a direct alternate proof of the known result that the Lindel\"of degree of 
$(2^\omega)^{{\mathfrak c}^+}$ under the $G_\delta$ topology is ${\mathfrak c}^+$. 






\begin{theorem} \label{mycielthm}
The Lindel\"of degree of $(2^\omega)^{{\mathfrak c}^+}$ under the $G_\delta$ topology is ${\mathfrak c}^+$.
\end{theorem}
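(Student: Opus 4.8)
The plan is to prove the two inequalities $L((2^\omega)^{\mathfrak{c}^+}_\delta)\le\mathfrak{c}^+$ and $L((2^\omega)^{\mathfrak{c}^+}_\delta)\ge\mathfrak{c}^+$ separately, the first by a weight computation and the second by passing to a coarser space and constructing a large closed discrete set there.

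For the upper bound I would first note that the $G_\delta$ topology on $(2^\omega)^{\mathfrak{c}^+}$ has a base consisting of the countably supported cylinders $\{y : y_\alpha=x_\alpha\text{ for all }\alpha\in D\}$ with $D\in[\mathfrak{c}^+]^{\le\omega}$: given $x$ in a $G_\delta$ set $\bigcap_n V_n$, choose for each $n$ a basic product-open box $B_n\ni x$ with $B_n\subseteq V_n$ and intersect the countably many coordinates they fix, pinning each to a point of $2^\omega$. Since $\mathrm{cf}(\mathfrak{c}^+)>\omega$ every countable subset of $\mathfrak{c}^+$ lies in an initial segment of size $\mathfrak{c}$ and $\mathfrak{c}^{\aleph_0}=\mathfrak{c}$, so there are $(\mathfrak{c}^+)^{\aleph_0}\cdot\mathfrak{c}=\mathfrak{c}^+$ such cylinders. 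Hence $w((2^\omega)^{\mathfrak{c}^+}_\delta)=\mathfrak{c}^+$ and $L\le w\le\mathfrak{c}^+$.

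For the lower bound I would pass to $\omega^{\mathfrak{c}^+}$, the product of $\mathfrak{c}^+$ discrete copies of $\omega$. Fixing any surjection $h\colon 2^\omega\to\omega$, the map $\varphi(x)=(h(x_\alpha))_{\alpha<\mathfrak{c}^+}$ is a continuous surjection from $(2^\omega)^{\mathfrak{c}^+}_\delta$ onto $\omega^{\mathfrak{c}^+}$: the preimage of a basic open set is a finite intersection of sets $\{x:x_\alpha\in h^{-1}(n)\}$, and each of these is $G_\delta$-open because pinning one coordinate to a point of $2^\omega$ is a $G_\delta$ condition, so $\{x:x_\alpha\in h^{-1}(n)\}$ is a union of $G_\delta$ sets. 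As the Lindel\"of degree does not increase under continuous surjections, it suffices to prove $L(\omega^{\mathfrak{c}^+})\ge\mathfrak{c}^+$, and for this it is enough to exhibit a closed discrete subspace of $\omega^{\mathfrak{c}^+}$ of size $\mathfrak{c}^+$, since $e\le L$.

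The heart of the argument, and the step I expect to be hardest, is the construction of that closed discrete set. The naive candidates --- the unit vectors $e_\alpha$, or the characteristic functions of initial segments --- all fail, because they accumulate at a single point (the constantly $0$ or constantly $1$ function): any fixed finite set of coordinates is invisible to cofinally many of them. The remedy is to make each $f_\alpha$ carry a self-locating marker: taking a bijection $e_\alpha\colon\omega\to\alpha+1$ with $e_\alpha(0)=\alpha$ and setting $f_\alpha(\beta)=e_\alpha^{-1}(\beta)+1$ for $\beta\le\alpha$ and $f_\alpha(\beta)=0$ otherwise, the value $1$ appears in $f_\alpha$ only at coordinate $\alpha$. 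This already isolates each $f_\alpha$ and separates every target $g$ that takes the value $1$ somewhere, while the injectivity of each $f_\alpha$ on its support disposes of every target that repeats a positive value. The genuinely delicate case is a target that is injective on its positive part and avoids the value $1$: here one must choose the enumerations $e_\alpha$ carefully --- using that each $\alpha<\mathfrak{c}^+$ is covered by only countably many positions, and exploiting the regularity of $\mathfrak{c}^+$ so that any $\mathfrak{c}$-sized amount of coordinate data has bounded support --- to force finitely many coordinates to pin down $\alpha$. This is exactly Mycielski's extent computation specialized to $\kappa=\mathfrak{c}^+$ (legitimate here, since $\mathfrak{c}^+$ is always below the first inaccessible cardinal when one exists), and carrying it out directly is the main obstacle.
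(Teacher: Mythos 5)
Your upper bound (the weight computation for the $G_\delta$ topology) is correct, and so is the reduction of the lower bound to $L(\omega^{\mathfrak{c}^+})\ge\mathfrak{c}^+$ via a continuous surjection. But the lower bound is the entire content of the theorem, and there your proposal has a genuine gap: the closed discrete set of size $\mathfrak{c}^+$ is never constructed. The sketch you give cannot work as written: for $\alpha\ge\omega_1$ there is no bijection $e_\alpha\colon\omega\to\alpha+1$ (nor any injection of $\alpha+1$ into $\omega$), so your functions $f_\alpha$ are undefined beyond the first uncountable ordinal. What you describe is the classical proof that $e(\omega^{\omega_1})=\omega_1$, which yields only $L\ge\omega_1$, far short of $\mathfrak{c}^+$. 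You acknowledge that the remaining case is ``the main obstacle'' and defer to Mycielski's theorem, but that theorem is itself proved by a transfinite induction through all cardinals below $\kappa$ (successor and singular-limit steps) which stalls at weakly inaccessible cardinals; your parenthetical justification is correct only if ``inaccessible'' is read as \emph{strongly} inaccessible, whereas it is consistent that a weakly inaccessible cardinal lies at or below $\mathfrak{c}$, in which case the hypothesis of the cited statement fails for $\kappa=\mathfrak{c}^+$. In any case, the paper presents this theorem explicitly as ``a direct alternate proof of the known result,'' so quoting the known result is not an available move: the hard step is exactly what must be proved, and it is missing.

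The gap is also a symptom of the reduction going in the wrong direction. Collapsing the fibers to $\omega$ destroys precisely the room that makes a direct argument possible: no function from $\mathfrak{c}^+$ to $\omega$ is injective on an uncountable set, so one cannot defeat a $\mathfrak{c}$-sized subfamily by diagonalizing with injectivity --- which is why $\omega^{\mathfrak{c}^+}$ genuinely requires Mycielski's machinery. The paper instead keeps fibers of size $\mathfrak{c}$: since $(2^\omega)_\delta$ is discrete of size $\mathfrak{c}$, the space $((2^\omega)^{\mathfrak{c}^+})_\delta$ is homeomorphic to $(D(\mathfrak{c})^{\mathfrak{c}^+})_\delta$, and the latter is covered by the $G_\delta$-open sets $[s]=\{f: s\subseteq f\}$, where $s$ ranges over countable non-injective partial functions from $\mathfrak{c}^+$ to $D(\mathfrak{c})$ (every total function repeats a value, by pigeonhole, since $\mathfrak{c}^+>\mathfrak{c}$). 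Any subfamily of size $\le\mathfrak{c}$ has all its domains contained in some $\alpha<\mathfrak{c}^+$ by regularity, and a $g$ that is injective on $\alpha$ --- which exists precisely because the fiber has size $\mathfrak{c}\ge|\alpha|$ --- lies in no member of that subfamily. This is self-contained, needs no induction, and in fact the paper obtains Mycielski's result for $D(\kappa)^{\kappa^+}$ as a corollary of this argument rather than the other way around. To salvage your outline, replace the target $\omega^{\mathfrak{c}^+}$ by $D(\mathfrak{c})^{\mathfrak{c}^+}$ with its $G_\delta$ modification (or just its product topology, using finite non-injective $s$); the diagonalization then becomes immediate.
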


\begin{proof} Note first that $(2^{\omega})^{\mathfrak{c}^+}$ with the $G_\delta$ topology is homeomorphic to $(D(\mathfrak{c})^{\mathfrak{c}^+})_\delta$, where $D(\mathfrak{c})$ is a discrete set of size continuum. We need to exhibit a cover with no subcover of size ${\mathfrak c}$.  For any countable partial function $s:{\mathfrak c}^+\rightarrow D(\mathfrak{c})$, the set $$[s]=\{f\in (D(\mathfrak{c}))^{{\mathfrak c}^+} : s\subseteq f\}$$ is open in the $G_\delta$ topology.
Let $${\mathcal U}=\{[s]: \dom(s)\in[\mathfrak{c}^+]^{\aleph_0}\text{ and }s:\dom(s)\rightarrow D(\mathfrak{c}) \text{ is not 1-1}\}$$

Note that since any function $f:{\mathfrak c}^+ \to D(\mathfrak{c})$ fails to be 1-1 on some countable subset, $((D(\mathfrak{c})^{{\mathfrak c}^+})_\delta$ is covered by ${\mathcal U}$. But if ${\mathcal V}\subseteq {\mathcal U}$ has cardinality $\leq {\mathfrak c}$, then there is an $\alpha<{\mathfrak c}^+$ such that $\dom(s)\subseteq \alpha$ for all $[s]\in{\mathcal V}$. Fix $g\in D(\mathfrak{c})^{{\mathfrak c}^+}$ such that $g\upharpoonright \alpha$ is 1-1, then $g$ is not covered by ${\mathcal V}$. Thus the Lindel\"of degree of $(2^\omega)^{{\mathfrak c}^+}$ under the $G_\delta$ topology is ${\mathfrak c}^+$. 

\end{proof}

The cover ${\mathcal U}$ could also have been chosen slightly differently. E.g., one could have also considered those $[s]$ with countable support where $s$ is not {\em finite-to-one}, or those $[s]$ with {\em finite} support and $s$ not 1-1 and the proof would also work. Indeed, the latter option gives us an open cover of $D(\mathfrak{c})^{{\mathfrak c}^+}$ with its usual product topology. Therefore, as a corollary to the proof we obtain Mycielski's result that $D(\kappa)^{\kappa^+}$ contains a closed discrete subset of size $\kappa^+$ \cite{M}.

\begin{corollary}{\em(Mycielski)} For any cardinal $\kappa$, the product topology on $D(\kappa)^{\kappa^+}$ has Lindel\"of degree $\kappa^+$ 
\end{corollary}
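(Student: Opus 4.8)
The plan is to run the proof of Theorem~\ref{mycielthm} almost verbatim, with the two changes forced by replacing the $G_\delta$ topology with the usual product topology: the basic open sets must have \emph{finite} support rather than countable support, and the pigeonhole principle takes over the role played there by the observation that a function fails to be injective on a countable set. Throughout assume $\kappa\geq\omega$. For a finite partial function $s:\kappa^+\to D(\kappa)$ put $[s]=\{f\in D(\kappa)^{\kappa^+}:s\subseteq f\}$, which is now a basic (cl)open set of the product topology, and set
$$\mathcal{U}=\{[s]:\dom(s)\in[\kappa^+]^{<\omega}\text{ and }s\text{ is not 1-1}\}.$$

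First I would verify that $\mathcal{U}$ covers $D(\kappa)^{\kappa^+}$: since $|D(\kappa)|=\kappa<\kappa^+$, every $f:\kappa^+\to D(\kappa)$ takes some value twice, say $f(\alpha)=f(\beta)$ with $\alpha\neq\beta$, so $s=f\upharpoonright\{\alpha,\beta\}$ is a finite partial function that is not injective and $f\in[s]\in\mathcal{U}$. Next, to see that no $\mathcal{V}\subseteq\mathcal{U}$ with $|\mathcal{V}|\leq\kappa$ covers the space, note that $\bigcup\{\dom(s):[s]\in\mathcal{V}\}$ has size at most $\kappa\cdot\omega=\kappa$, hence is contained in some $\alpha<\kappa^+$ with $|\alpha|\leq\kappa$. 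Because $|\alpha|\leq\kappa=|D(\kappa)|$, I can fix $g\in D(\kappa)^{\kappa^+}$ with $g\upharpoonright\alpha$ injective; then for each $[s]\in\mathcal{V}$ the restriction $g\upharpoonright\dom(s)$ is injective while $s$ is not, so $s\not\subseteq g$ and $g\notin\bigcup\mathcal{V}$. This shows $L(D(\kappa)^{\kappa^+})>\kappa$, i.e. $L(D(\kappa)^{\kappa^+})\geq\kappa^+$.

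For the reverse inequality I would bound the weight directly. The finite-support sets $[s]$ form a base for the product topology, there are $|[\kappa^+]^{<\omega}|=\kappa^+$ possible finite domains, and each such domain of size $n$ carries only $\kappa^n=\kappa$ many functions into $D(\kappa)$, so this base has cardinality $\kappa^+\cdot\kappa=\kappa^+$. Hence $w(D(\kappa)^{\kappa^+})\leq\kappa^+$, and since $L(X)\leq w(X)$ for every space we obtain $L(D(\kappa)^{\kappa^+})\leq\kappa^+$. Combining the two inequalities yields $L(D(\kappa)^{\kappa^+})=\kappa^+$.

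I do not expect any genuine obstacle beyond the two adaptations already flagged: using finite support is exactly what keeps the members of $\mathcal{U}$ open in the product topology, and the injective map $g\upharpoonright\alpha$ exists precisely because a set of size $\leq\kappa$ embeds into $D(\kappa)$. Equivalently, one could recast the lower bound in Mycielski's original language by exhibiting a family of $\kappa^+$ functions that is closed and discrete, giving $e(D(\kappa)^{\kappa^+})\geq\kappa^+$ and hence the same bound on $L$; but the cover argument above is the most direct route and is literally the ``corollary to the proof'' of Theorem~\ref{mycielthm}.
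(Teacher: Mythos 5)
Your proposal is correct and is essentially the paper's own argument: the authors obtain this corollary precisely by rerunning the proof of Theorem~\ref{mycielthm} with finite-support cylinders $[s]$ where $s$ is not 1-1, using pigeonhole for the covering step and a bounding ordinal $\alpha<\kappa^+$ plus an injective $g\upharpoonright\alpha$ for the no-small-subcover step, exactly as you do. Your explicit verification of the upper bound $L\leq w(D(\kappa)^{\kappa^+})=\kappa^+$ is a detail the paper leaves implicit, but it is the obvious intended justification.
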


We now construct a compact space $X$ such that $wL(X_\delta) > 2^\omega$ in ZFC, which solves Arhangel'skii's question.

\begin{theorem} \label{mainex}
Suppose there is a compact space which has a partition into $\kappa$ many $G_\delta$ sets. Then there is a compact space $X$ admitting a $G_\delta$-cover of $X$ with no $\kappa$-sized dense subcollection (in particular, $wL(X_\delta) >\kappa$).
\end{theorem}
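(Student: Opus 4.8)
The plan is to reduce the problem to producing, on some compact space $X$, two ingredients: a $G_\delta$-cover $\mathcal{U}$ of $X$, and a family $\{H_\xi : \xi < \kappa^+\}$ of pairwise disjoint nonempty $G_\delta$ subsets of $X$, arranged so that \emph{each} $U \in \mathcal{U}$ meets at most $\kappa$ of the $H_\xi$. Granting these, the theorem is immediate: if $\mathcal{V} \subseteq \mathcal{U}$ has $|\mathcal{V}| \leq \kappa$, then $\bigcup \mathcal{V}$ meets at most $\kappa \cdot \kappa = \kappa < \kappa^+$ of the $H_\xi$, so some $H_\xi$ is disjoint from $\bigcup \mathcal{V}$. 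Since $H_\xi$ is a nonempty set that is open in $X_\delta$, the union $\bigcup \mathcal{V}$ is not $G_\delta$-dense. Hence no $\kappa$-sized subcollection of $\mathcal{U}$ has $G_\delta$-dense union, which gives $wL(X_\delta) > \kappa$. So the real work is to manufacture $X$, $\mathcal{U}$ and the $H_\xi$ from the hypothesised partition.

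For the cover I would use the partition directly. Let $\{P_\alpha : \alpha < \kappa\}$ be the given partition of a compact $K$ into $G_\delta$ sets and let $c : K \to \kappa$ be the associated colouring. Working in a power $K^{\kappa^+}$, set $W_{\beta,\gamma,\alpha} = \pi_\beta^{-1}(P_\alpha) \cap \pi_\gamma^{-1}(P_\alpha) = \{f : f(\beta), f(\gamma) \in P_\alpha\}$ for distinct $\beta,\gamma < \kappa^+$ and $\alpha < \kappa$. Each $W_{\beta,\gamma,\alpha}$ is $G_\delta$ (a finite intersection of projection-preimages of $G_\delta$ sets), and the family $\mathcal{U} = \{W_{\beta,\gamma,\alpha}\}$ covers $K^{\kappa^+}$: since $\kappa^+ > \kappa$, the composite $c \circ f$ cannot be injective, so every $f$ lands in a common piece at two distinct coordinates. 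The appeal of this cover is that coincidences are \emph{localised to a pair of coordinates}, which is exactly what should force the ``meets at most $\kappa$ of the $H_\xi$'' condition once the $H_\xi$ are defined so that membership in $H_\xi$ constrains \emph{which} coordinates may witness a coincidence (ideally, so that $W_{\beta,\gamma,\alpha}$ can meet only the $H_\xi$ with $\xi$ tied to $\beta$ or $\gamma$).

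The disjoint family is where I expect the main obstacle. One cannot use the full power $K^{\kappa^+}$: being a product of separable spaces it is ccc, and by Juh\'asz's theorem a compact ccc space satisfies $wL(X_\delta) \leq \mathfrak{c}$, so the target space \emph{must} be non-ccc. The $\kappa^+$ pairwise disjoint $G_\delta$ sets therefore have to come from a carefully chosen compact subspace $X \subseteq K^{\kappa^+}$ rather than from the colouring combinatorics of the product. A natural mechanism is an ``exceptional coordinate'' device: distinguish a set $B \subseteq K$ and let $X$ consist of those $f$ that lie in $B$ at all but at most one coordinate, with $H_\xi = \{f \in X : f(\xi) \notin B\}$; these are pairwise disjoint, and $G_\delta$ in $X$, and there are $\kappa^+$ of them. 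The crux is to reconcile three competing demands simultaneously: (i) $X$ must be genuinely \emph{compact}, which is delicate precisely because the partition pieces $P_\alpha$ are only $G_\delta$ and not closed, so the defining conditions are not automatically closed and one likely has to build $X$ as a closed subspace or inverse limit described through open sets $U_{\alpha,n}$ with $P_\alpha = \bigcap_n U_{\alpha,n}$; (ii) the cover $\mathcal{U}$ (suitably adapted to $X$) must still cover $X$ via the pigeonhole coincidence; and (iii) the coincidence localisation must yield the counting bound that each cover member meets at most $\kappa$ of the $H_\xi$. Getting a single closed subspace that carries the cover, supports the disjoint family, and keeps the meeting numbers small is the step I expect to require the most care, and it is where the assumption that the $P_\alpha$ form a \emph{partition} (rather than merely a $\kappa$-sized family of $G_\delta$ sets) should be used in an essential way.
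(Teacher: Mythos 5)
Your reduction in the first paragraph is sound (and is in fact a valid abstraction of how the paper's own argument finishes), and the pair-coordinate cover $W_{\beta,\gamma,\alpha}$ is fine --- it is one of the variants the paper itself notes would work. But the proposal stops exactly where the theorem lives: you never construct the compact subspace $X$ carrying the disjoint family, and the one device you float --- $X$ consisting of the $f$ lying in a fixed closed $B \subseteq K$ at all but at most one coordinate, with $H_\xi = \{f \in X : f(\xi) \notin B\}$ --- provably cannot satisfy your own condition (iii). Indeed, pick any $\alpha$ with $P_\alpha \cap B \neq \emptyset$ (such $\alpha$ exists, since the $P_\alpha$ cover $B$). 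Then for \emph{every} $\xi \notin \{\beta,\gamma\}$ the set $W_{\beta,\gamma,\alpha}$ meets $H_\xi$: take $f$ with $f(\beta)=f(\gamma)$ a point of $P_\alpha \cap B$, with $f(\xi) \notin B$, and with $f$ in $B$ at all remaining coordinates. So this single cover element meets $\kappa^+$ many of the $H_\xi$, destroying the counting bound. The failure is structural: membership in $H_\xi$ constrains only the one coordinate $\xi$, while coincidences happen freely at all other coordinates. (A side remark: your appeal to Juh\'asz's ccc theorem to rule out the full power is only valid when $K$ is separable or ccc; for general $K$ one rules it out directly, but this is minor.)

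The missing idea --- which is the actual content of the paper's proof --- is to make the exceptional coordinate control \emph{all earlier coordinates at once}, and to do so using only clopen sets and single points, never the $G_\delta$ pieces themselves. The paper doubles the space, $K = \tilde{K} \times 2$, obtaining complementary nonempty clopen sets $V_0, V_1$ with $\{G_\tau \cap V_1 : \tau < \kappa\}$ still a $\kappa$-sized disjoint family, and builds $X \subseteq K^{\kappa^+}$ as an inverse limit: at stage $\beta+1$, having $X_\beta \supseteq (V_1)^\beta$, it fixes a \emph{partition-injective} point $f_\beta \in (V_1)^\beta \cap X_\beta$ (values at distinct coordinates in distinct pieces $G_\tau$; possible since $|\beta| \leq \kappa$) and sets $X_{\beta+1} = (X_\beta \times V_1) \cup (\{f_\beta\} \times V_0)$. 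Each $X_{\beta+1}$ is closed because $V_0, V_1$ are clopen and $\{f_\beta\}$ is a point, which disposes of your worry (i); your worry (ii) is automatic since $X$ is a subspace of $K^{\kappa^+}$. Now $H_\beta := \{f \in X : f(\beta) \in V_0\}$ forces $f \upharpoonright \beta = f_\beta$, so if $W_{\beta',\gamma',\alpha}$ meets $H_\xi$ then $\xi \leq \max(\beta',\gamma')$ (otherwise $f(\beta') = f_\xi(\beta')$ and $f(\gamma') = f_\xi(\gamma')$ would lie in distinct pieces), whence each cover element meets at most $|\max(\beta',\gamma')+1| \leq \kappa$ of the $H_\xi$ --- exactly your condition (iii). (The paper finishes with a cofinality argument instead --- given a $\kappa$-sized subfamily, take $\alpha$ above all supports and check $H_\alpha$ misses its union --- but the mechanism is identical.) So your skeleton is the right one, but the recursive construction realizing it is the essential step, and it cannot be replaced by the one-exceptional-coordinate subspace you propose.
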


\begin{proof}
Let $\tilde{K}$ be a compact space having a partition into $\kappa$ many $G_\delta$ sets, set $K=\tilde{K} \times 2$ and let $\{G_\alpha: \alpha < \kappa\}$ be a partition of $K$ into $\kappa$ many $G_\delta$ sets. Moreover, let $\{V_0, V_1 \}$ be a partition of $K$ into a pair of non-empty clopen sets. Without loss we can assume that $\{G_\alpha \cap V_1: \alpha < \kappa\}$ is a pairwise disjoint family of $G_\delta$ sets of cardinality $\kappa$. We define $X \subset (K)^{\mathfrak{\kappa}^+}$ as an inverse limit of compacta $X_\alpha \subset (K)^\alpha$. 

The $X_\alpha$'s are defined recursively preserving the following two conditions for every $\alpha < \kappa^+$

\begin{enumerate}
\item $X_\alpha$ is a closed subset of $K^\alpha$.
\item $(V_1)^\alpha \subset X_\alpha$.
\end{enumerate}

The base case is $X_{\kappa}=(K)^{\kappa}$. For $\alpha > \kappa$ limit, let $X_\alpha$ be the inverse limit of the previously defined $X_\beta$'s. Note that the inductive hypotheses are satisfied.

Suppose now $\alpha=\beta+1$ and $X_\beta$ has already been defined and use $(V_1)^\beta \subset X_\beta$ to choose $f_\beta \in X_\beta$ such that $f_\beta(\gamma)$ and $f_\beta(\delta)$ don't belong to the same $G_\tau$, whenever $\gamma < \delta < \alpha$. 

Now let $X_{\beta+1}=\{g: \beta+1 \to K:  g \upharpoonright \beta \in X_\beta \wedge (g(\beta) \in V_0 \Rightarrow g \upharpoonright \beta=f_\beta) \}$.

Since $X_{\beta+1}=(X_\beta \times V_1) \cup (\{f_\beta\} \times V_0)$, the two inductive hypotheses are preserved.

Finally let $X=X_{\kappa^+}$. 

Given a partial function $s: dom(s) \to K$, where $dom(s) \in [\kappa^+]^\omega$, let $<s>=\prod \{W_\alpha: \alpha \in \kappa^+\}$, where $W_\alpha=G_{s(\alpha)}$ if $\alpha \in dom(s)$ and $W_\alpha = K$ otherwise. The set $\mathcal{U}=\{<s>: s$ is not one-to-one $\}$ is a $G_\delta$ cover of $X$ such that no $\kappa$-sized subcollection has a dense union. Indeed, let $\mathcal{V} \subset \mathcal{U}$ be a $\kappa$-sized subcollection And let $\alpha <\kappa^+$ be an ordinal such that $dom(s) \subset \alpha$ for every $<s> \in \mathcal{V}$. Thus $f_\alpha \notin \bigcup \mathcal{V}$. Now consider the basic open set $W:=\{g \in X: g(\alpha) \in V_0 \}$. Then $W \subset \{g \in X: g \upharpoonright \alpha = f_\alpha \}$, hence $W \cap (\bigcup \mathcal{V})=\emptyset$, as we wanted.

\end{proof}

\begin{corollary} \label{maincor}
There is a compact space $X$ such that $wL(X_\delta)=\mathfrak{c}^+$.
\end{corollary}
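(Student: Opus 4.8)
The plan is to deduce Corollary \ref{maincor} from Theorem \ref{mainex} by exhibiting, for the specific value $\kappa = \mathfrak{c}$, a compact space that partitions into exactly $\mathfrak{c}$ many $G_\delta$ sets. Theorem \ref{mainex} then immediately produces a compact $X$ with a $G_\delta$ cover admitting no $\mathfrak{c}$-sized subcollection with dense union, so that $wL(X_\delta) > \mathfrak{c}$, i.e. $wL(X_\delta) \geq \mathfrak{c}^+$. The only remaining work is the matching upper bound $wL(X_\delta) \leq \mathfrak{c}^+$.

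First I would verify the hypothesis of Theorem \ref{mainex} with $\kappa = \mathfrak{c}$. The natural candidate is $\tilde K = 2^\omega$, the Cantor set, which I claim partitions into $\mathfrak{c}$ many $G_\delta$ sets. A clean way to see this: singletons in $2^\omega$ are themselves $G_\delta$ sets (since the space is first countable and metrizable), so the partition of $2^\omega$ into its $\mathfrak{c}$ many singletons is already a partition into $\mathfrak{c}$ many $G_\delta$ sets. Thus the hypothesis holds, and Theorem \ref{mainex} yields a compact $X$ with $wL(X_\delta) > \mathfrak{c}$.

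For the upper bound, the key observation is that the space $X$ constructed in the proof of Theorem \ref{mainex} sits inside $K^{\mathfrak{c}^+}$ where $K = \tilde K \times 2$ is second countable (a countable product factor base), so $X$ embeds in $(2^\omega)^{\mathfrak{c}^+}$ up to the $G_\delta$ topology. I would invoke Theorem \ref{mycielthm}, which gives $L\big(((2^\omega)^{\mathfrak{c}^+})_\delta\big) = \mathfrak{c}^+$; since the weak Lindel\"of number is never larger than the Lindel\"of number and both are monotone under passing to closed subspaces (here $X_\delta$ is a subspace of the $G_\delta$ topology on the full product), this bounds $wL(X_\delta) \leq L(X_\delta) \leq \mathfrak{c}^+$. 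Combining the two inequalities gives $wL(X_\delta) = \mathfrak{c}^+$ exactly.

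The main obstacle I anticipate is the upper bound argument, specifically justifying that $L(X_\delta) \leq \mathfrak{c}^+$ cleanly. One must be careful that $X$, while a closed subspace of $K^{\mathfrak{c}^+}$ in the product topology, carries a $G_\delta$ topology that agrees with the subspace topology inherited from $(K^{\mathfrak{c}^+})_\delta$; this is routine since $G_\delta$ sets of a subspace are traces of $G_\delta$ sets of the ambient space, and closed subspaces do not increase the Lindel\"of degree. A subtlety worth flagging is that $K^{\mathfrak{c}^+}$ and $(2^\omega)^{\mathfrak{c}^+}$ must be identified as having homeomorphic $G_\delta$ topologies, which follows because $K$ is a compact metrizable space and hence the $G_\delta$-modification of any countable power of it matches that of $2^\omega$; I would simply note this and apply Theorem \ref{mycielthm}.
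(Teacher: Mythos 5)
Your proposal is correct and takes essentially the same approach as the paper: the paper's one-line proof is exactly your first step (apply Theorem \ref{mainex} to $2^\omega$ partitioned into its $\mathfrak{c}$ many $G_\delta$ singletons, giving $wL(X_\delta)\geq\mathfrak{c}^+$), while the matching upper bound, which you derive from Theorem \ref{mycielthm} together with the fact that $X_\delta$ is (up to homeomorphism) a closed subspace of $\big((2^\omega)^{\mathfrak{c}^+}\big)_\delta$, is left implicit in the paper. One caution: your parenthetical claim that the weak Lindel\"of number is monotone under passing to closed subspaces is false in general (this failure is precisely why the paper works with $wL_c$ in Section 3), but your argument never actually relies on it, since the chain $wL(X_\delta)\leq L(X_\delta)\leq L\big(((2^\omega)^{\mathfrak{c}^+})_\delta\big)=\mathfrak{c}^+$ uses only $wL\leq L$ and the closed-subspace monotonicity of the Lindel\"of number.
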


\begin{proof}
Simply set $K=2^\omega$ in the construction of Theorem $\ref{mainex}$ and note that every point of $K$ is a $G_\delta$ set.
\end{proof}

Theorem $\ref{mainex}$ suggests a way of getting a compact space whose $G_\delta$ topology has weak Lindel\"of number greater than the successor of the continuum, provided that there exists a compact space having a partition into $G_\delta$ sets of cardinality $\mathfrak{c}^+$. Thus we may ask:

\begin{question}
Is there a compact space having a partition into $\mathfrak{c}^+$ many $G_\delta$ sets?
\end{question}

While Arhangel'skii \cite{A2} showed that no compactum can be partitioned into more that $2^{\aleph_0}$ closed $G_\delta$'s, we do not know if there is a bound on the size of partitions of compacta into $G_\delta$ subsets. This suggests the following question:

\begin{question} \label{partquest}
Is there in ZFC a cardinal $\kappa$ so that for any compact space $X$ and any partition $P$ of $X$ into $G_\delta$ subsets, we have that $|P|<\kappa$?
\end{question}

Even if Question $\ref{partquest}$ had a positive answer, this would not exclude the possibility of the existence of compact spaces with arbitrarily large weak Lindel\"of number in their $G_\delta$ topologies. So we finish with the following more general question:

\begin{question} \label{questbound}
Is there any bound on the weak Lindel\"of number of the $G_\delta$ topology on a compact space?
\end{question}

We are also intrigued about the possibility of restricting Arhangel'skii's problem to compact spaces with some additional structure. Every compact group has the countable chain condition, so using Juh\'asz's result from \cite{J} we get that $wL(X_\delta) \leq c(X_\delta) \leq 2^{\aleph_0}$ for every compact group $X$. Recall that space is \emph{homogeneous} if for every pair of points $x, y \in X$ there is a homeomorphism $f: X \to X$ such that $f(x)=y$. Every topological group is a homogeneous space.

\begin{question}
Is there a compact homogenous space $X$ such that $wL(X_\delta) > 2^{\aleph_0}$?
\end{question}

Actually, we don't even know whether the example from Corollary $\ref{maincor}$ can be made homogenous. If it could, it would provide an answer to van Douwen's long standing question about the existence of a compact homogenous space of cellularity larger than the continuum (see \cite{K}). As a matter of fact, the cellularity of our example is ${\mathfrak c}^+$. Indeed, the clopen sets $W_\alpha=\pi_{\{\alpha\}}^{-1}(U_0)=\{f\in X:f(\alpha)\in U_0\}$ are pairwise disjoint. To see this, suppose that $\beta<\alpha$ and recall that $f_\alpha$ was chosen to be a 1-1 function in $(U_1)^\alpha$. And by the construction, if $f\in X$ and $f(\alpha)=0$ then 
$f\upharpoonright \alpha=f_\alpha$. And so for any $\beta<\alpha$ and any $f\in W_\alpha$ we have that $f(\beta)=1$. I.e., $f\not\in W_\beta$ and so $W_\alpha\cap W_\beta=\emptyset$.

\section{Bounds for the $G_\kappa$ modification}

Given a space $X$ we denote by $X_\kappa$ the topology on $X$ generated by the $G_\kappa$-subsets of $X$ (that is, the intersections of $\kappa$-sized families of open subsets of $X$). It is natural to ask what properties are preserved when passing from $X$ to $X_\kappa$ and whether cardinal invariants of $X_\kappa$ can be bound in terms of cardinal invariants of $X$. There has been a fair amount of work in the past on this general question, especially for chain conditions and covering properties (see for example \cite{LR}, \cite{KMS}, \cite{J}, \cite{FW}, \cite{Ge}, \cite{S}). The aim of this section is to present some preservation results that are related to Arhangel'skii's problems mentioned in the introduction.

Recall that $wL_c(X)$ is defined as the minimum cardinal $\kappa$ such that for every closed set $F \subset X$ and for every family $\mathcal{U}$ of open sets of $X$ covering $F$ there is a $\kappa$-sized subfamily $\mathcal{V}$ of $\mathcal{U}$ such that $F \subset \overline{\bigcup \mathcal{V}}$. 

It's well known and easy to prove that $wL_c(X)=wL(X)$ for every normal space.

A set $G \subset X$ is called a $G^c_\kappa$ set if there is a family $\{U_\alpha: \alpha < \kappa \}$ of open subsets of $X$ such that $G=\bigcap \{U_\alpha: \alpha < \kappa\}=\bigcap \{\overline{U_\alpha}: \alpha < \kappa \}$.

Given a space $X$, we denote with $X^c_\kappa$ the topology generated by the $G^c_\kappa$ subsets of $X$. Clearly, if $X$ is regular then $X^c_\kappa=X_\kappa$.

\begin{theorem} \label{wLtheorem}
Let $X$ be an initially $\kappa$-compact space such that $t(X) wL_c(X) \leq \kappa$. Then $wL(X^c_\kappa) \leq 2^\kappa$.
\end{theorem}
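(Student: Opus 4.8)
The plan is to fix a cover $\mathcal{U}$ of $X$ by $G^c_\kappa$ sets and to produce the required small subfamily as the trace of $\mathcal{U}$ on a single elementary submodel. Using the closure property recalled in the introduction, I would choose $M \prec H(\theta)$ (for large regular $\theta$) that is $\kappa$-closed, has $|M| = 2^\kappa$, contains $X$, its topology and $\mathcal{U}$, and satisfies $2^\kappa \subseteq M$. I would also fix, inside $M$, for each $W \in \mathcal{U}$ a representation $W = \bigcap_{\alpha<\kappa} O(W,\alpha) = \bigcap_{\alpha<\kappa}\overline{O(W,\alpha)}$ with the $\overline{O(W,\alpha)}$ decreasing in $\alpha$; note that each $W$ is then closed in $X$, hence itself initially $\kappa$-compact. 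Since $\mathcal{U} \in M$ covers $X$, elementarity gives that every point of $X \cap M$ lies in some member of $\mathcal{U} \cap M$, so $X \cap M \subseteq \bigcup(\mathcal{U}\cap M) =: D$. As $|\mathcal{U}\cap M| \le |M| = 2^\kappa$, the theorem reduces to showing the claim that $D$ is dense in $X^c_\kappa$.

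I would prove this claim by contradiction. If it fails, there is a nonempty basic $X^c_\kappa$-open set, i.e.\ a nonempty $G^c_\kappa$ set $H = \bigcap_{\alpha<\kappa}\overline{H_\alpha}$ (with $H_\alpha$ open and $\overline{H_\alpha}$ decreasing), disjoint from $D$; in particular $H$ is closed in $X$ and, since $X\cap M\subseteq D$, we get $H \cap M = \emptyset$. The engine is to reflect $wL_c(X)\le\kappa$ through $M$ coordinate by coordinate: for each $\alpha<\kappa$ the family $\{O(W,\alpha): W \in \mathcal{U}\}$ is an open cover of $X$ lying in $M$, so by $wL_c$ and elementarity there is a $\kappa$-sized $\mathcal{U}_\alpha \in M$ with $X = \overline{\bigcup\{O(W,\alpha): W\in\mathcal{U}_\alpha\}}$, and $\kappa$-closure of $M$ forces $\mathcal{U}_\alpha \subseteq \mathcal{U}\cap M$. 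Because each such union is dense in $X$ and every layer $H_\alpha$ is open and nonempty, $H_\alpha$ meets the $\alpha$-th layers of members of $\mathcal{U}\cap M$. I would then run a recursion of length $\kappa$ threading these meetings together, at each stage shrinking a nonempty relatively closed subset of $H$ and recording the members selected; initial $\kappa$-compactness guarantees that the decreasing $\kappa$-sequence of nonempty closed pieces has nonempty intersection, producing a genuine point $q \in H$, while $\kappa$-closure keeps the recording sequence inside $M$ and $t(X)\le\kappa$ lets me reflect the behaviour of $q$ back into $M$-traced data. The contradiction sought is that $q$ then lies in $H \cap W$ for a single $W \in \mathcal{U}\cap M$, against $H \cap D = \emptyset$.

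The hard part will be this last step, namely reconciling the two topologies. The applications of $wL_c$ control only the individual open layers $O(W,\alpha)$, and only up to \emph{original} closure, whereas the contradiction requires landing in a full member $W = \bigcap_\alpha\overline{O(W,\alpha)}$, i.e.\ in one $G^c_\kappa$ set of $\mathcal{U}\cap M$ rather than in a patchwork of $\alpha$-th layers of \emph{different} members. So the main obstacle is to organize the length-$\kappa$ recursion so that one member $W\in\mathcal{U}\cap M$ is pinned along the relevant branch and so that the accumulation point supplied by initial $\kappa$-compactness actually falls inside that $W$; here I would exploit that $W$ is closed and initially $\kappa$-compact, so that layerwise meetings with the decreasing closed sets $\overline{H_\alpha}$ persist to the intersection. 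The factor $2^\kappa$ in the bound comes exactly from $|M| = 2^\kappa$, that is, from the number of members of $\mathcal{U}$ such a $\kappa$-closed submodel can see. I expect the identity $wL_c = wL$ for normal spaces and the identity $X^c_\kappa = X_\kappa$ for regular spaces to enter only when translating hypotheses and conclusion into the $G^c_\kappa$ framework.
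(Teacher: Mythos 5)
Your setup (the $\kappa$-closed elementary submodel $M$ of size $2^\kappa$, and the reduction to showing that $D=\bigcup(\mathcal{U}\cap M)$ is $G^c_\kappa$-dense) matches the paper, but the core of the argument is exactly the part you leave open, and the strategy you sketch for it does not work. Applying $wL_c$ layer by layer to the covers $\{O(W,\alpha): W\in\mathcal{U}\}$ gives, for each $\alpha$ and each nonempty open $H_\beta$, \emph{some} $W\in\mathcal{U}_\alpha$ with $H_\beta\cap O(W,\alpha)\neq\emptyset$, but the witnessing $W$ changes with $\alpha$ and $\beta$: with up to $2^\kappa$ candidates and only $\kappa$ layers, no pigeonhole or recursion pins down a single $W\in\mathcal{U}\cap M$ satisfying $W\cap\overline{H_\alpha}\neq\emptyset$ for every $\alpha$, which is what your final compactness step requires. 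Nothing prevents every $W\in\mathcal{U}\cap M$ from having levels $\gamma(W),\alpha(W)<\kappa$ with $\overline{O(W,\gamma(W))}\cap\overline{H_{\alpha(W)}}=\emptyset$ while all the layerwise density statements hold; your recursion can then only produce a point of $H\cap\bigcap_\alpha\overline{O(W_\alpha,\alpha)}$ for \emph{varying} $W_\alpha$ --- precisely the patchwork you yourself identify --- and such a point yields no contradiction. A symptom of the same problem is that the tightness hypothesis never enters your argument in any concrete way, whereas in the paper it is indispensable: it is what shows that $\mathcal{U}\cap M$ covers the \emph{closed} set $\overline{X\cap M}$ (given $x\in\overline{X\cap M}$, take $C\in[X\cap M]^{\leq\kappa}$ with $x\in\overline{C}$; the set $B=\bigcap_\alpha\overline{O(W,\alpha)\cap C}$ lies in $M$ by $\kappa$-closedness, contains $x$, and sits inside the member $W$ of $\mathcal{U}$ through $x$, so elementarity produces a member of $\mathcal{U}\cap M$ containing $x$). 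Covering $\overline{X\cap M}$ rather than just $X\cap M$ is essential, because both $wL_c$ and initial $\kappa$-compactness must be applied to a closed set.

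The paper resolves the difficulty you flag by reversing the order of the two main tools: compactness first, $wL_c$ second (and only once), rather than $wL_c$ layer by layer followed by compactness. Concretely, for each $x\in\overline{X\cap M}$ pick $W\in\mathcal{U}\cap M$ with $x\in W$; since $W\cap H=\emptyset$ and both sets are intersections of $\kappa$ many closed sets, initial $\kappa$-compactness of $X$ converts this $\kappa$-level disjointness into \emph{finite-level} disjointness: there are an open $U_x\in M$ (a finite intersection of layers of $W$) containing $x$ and a finite $F_x\subseteq\kappa$ such that $U_x\cap\bigcap\{H_\alpha:\alpha\in F_x\}=\emptyset$. Grouping the $U_x$ according to the type $F_x$ and using initial $\kappa$-compactness of $\overline{X\cap M}$, finitely many types $\mathcal{H}\subseteq[\kappa]^{<\omega}$ suffice to cover $\overline{X\cap M}$, so the single open set $O=\bigcap\{\bigcap\{H_\alpha:\alpha\in F\}: F\in\mathcal{H}\}$, which contains $H$ and is therefore nonempty, misses every member of the cover $\{U_x: F_x\in\mathcal{H}\}$ of $\overline{X\cap M}$. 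Now apply $wL_c(X)\leq\kappa$ once to this closed set and this open cover, obtaining a $\kappa$-sized $\mathcal{V}$ with $\overline{X\cap M}\subseteq\overline{\bigcup\mathcal{V}}$; by $\kappa$-closedness $\mathcal{V}\in M$, so elementarity upgrades $X\cap M\subseteq\overline{\bigcup\mathcal{V}}$ to $X\subseteq\overline{\bigcup\mathcal{V}}$, contradicting the fact that $O$ is a nonempty open set disjoint from $\bigcup\mathcal{V}$. This finite-type reduction is the missing idea: it eliminates any need to pin one member of $\mathcal{U}$ across all $\kappa$ layers, which is where your plan breaks down.
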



\begin{proof}

Let $\mathcal{F}$ be a cover of $X$ by $G^c_\kappa$ sets. Let $M$ be a $\kappa$-closed elementary submodel of $H(\theta)$ such that $X, \mathcal{F} \in M$, $\kappa+1 \subset M$ and $|M|  \leq 2^\kappa$.

\noindent {\bf Claim.} $\mathcal{F} \cap M$ covers $\overline{X \cap M}$.

\begin{proof}[Proof of Claim]
Fix $x \in \overline{X \cap M}$ and let $F$ be an element of $\mathcal{F}$ containing $x$. Let $C$ be a subset of $X \cap M$ of cardinality $\kappa$ such that $x \in \overline{C}$. Let $\{U_\alpha: \alpha < \kappa \}$ be a $\kappa$-sized sequence of open sets such that $F=\bigcap \{U_\alpha: \alpha < \kappa \}=\bigcap \{\overline{U_\alpha}: \alpha < \kappa\}$. Let $C_\alpha=U_\alpha \cap C$. Since $U_\alpha$ is a neighbourhood of $x$ we have $x \in \overline{C_\alpha}$. Since $M$ is $\kappa$-closed we have $C_\alpha \in M$. Moreover, $\bigcap_{\alpha<\kappa} \overline{C_\alpha} \in M$. Let $B=\bigcap_{\alpha < \kappa} \overline{C_\alpha}$ and note that $H(\theta) \models (\exists G \in \mathcal{F})(B \subset G)$. Since $B \in M$, it follows by elementarity that $M \models (\exists G \in \mathcal{F})(B \subset G)$. Hence there is $G \in \mathcal{F} \cap M$ such that $B \subset G$, and since $x \in B \subset G$ we get what we wanted.
\renewcommand{\qedsymbol}{$\triangle$}
\end{proof}

Let us now prove that $\bigcup (\mathcal{F} \cap M)$ is $G^c_\kappa$-dense in $X$. 

If this were not the case, there would be a $G^c_\kappa$-subset $G \subset X$ such that $G \cap \bigcup (\mathcal{F} \cap M)=\emptyset$.

Let $\{V_\alpha: \alpha < \kappa \}$ be a sequence of open subsets of $X$ such that $G=\bigcap \{V_\alpha: \alpha < \kappa\}=\bigcap \{\overline{V_\alpha}: \alpha < \kappa \}$. Using initial $\kappa$ compactness and the above claim we can find, for every $x \in \overline{X \cap M}$ an open neighbourhood $U_x \in M$ of the point $x$ and a finite subset $F_x \subset \kappa$ such that $U_x \cap \bigcap \{V_\alpha: \alpha \in F_x\}=\emptyset$. For every $F \in [\kappa]^{<\omega}$ let $U_F=\bigcup \{U_x: F_x=F\}$. Then $\{U_F: F \in [\kappa]^{<\omega}\}$ is a $\kappa$-sized open cover of the initially $\kappa$-compact space $\overline{X \cap M}$. Hence we can find a finite subset $\mathcal{H} \subset [\kappa]^{<\omega}$ such that $\{U_F: F \in \mathcal{H} \}$ covers $\overline{X \cap M}$. But then $\mathcal{U}=\{U_x: F_x \in \mathcal{H}\}$ is an open cover of $\overline{X \cap M}$. Using the fact that $wL_c(X) \leq \kappa$ we can find a subcollection $\mathcal{V} \in [\mathcal{U}]^\kappa$ such that $X \cap M \subset \overline{X \cap M} \subset \overline{\bigcup \mathcal{V}}$. Note that $\mathcal{V} \subset \mathcal{U} \subset M$ and $M$ is $\kappa$-closed, so $\mathcal{V} \in M$ and hence $M \models X \subset \overline{\bigcup \mathcal{V}}$. By elementarity it follows that $H(\theta) \models X \subset \overline{\bigcup \mathcal{V}}$, but that contradicts the fact that the open set $\bigcap \{\bigcap \{V_\alpha: \alpha \in F \}: F \in \mathcal{H}\}$ misses every element of $\mathcal{U}$.
\end{proof}

\begin{corollary}
Let $X$ be an initially $\kappa$-compact regular space such that $t(X) wL_c(X) \leq \kappa$. Then $wL(X_\kappa) \leq 2^\kappa$.
\end{corollary}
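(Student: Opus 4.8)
The plan is to deduce the corollary directly from Theorem \ref{wLtheorem}. The theorem already furnishes the bound $wL(X^c_\kappa) \leq 2^\kappa$ under exactly the stated hypotheses (initial $\kappa$-compactness together with $t(X)wL_c(X) \leq \kappa$), so the entire content of the corollary reduces to the observation recorded just before the theorem, namely that $X^c_\kappa = X_\kappa$ whenever $X$ is regular. Once this identity of topologies is in hand, the conclusion $wL(X_\kappa) = wL(X^c_\kappa) \leq 2^\kappa$ is immediate, since the weak Lindel\"of number is a topological invariant.

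It therefore remains only to verify the coincidence $X^c_\kappa = X_\kappa$ for regular $X$. Both families are closed under $\kappa$-sized intersections and hence form bases for their respective topologies, so it suffices to compare them pointwise. One inclusion is trivial: every $G^c_\kappa$ set is in particular an intersection of $\kappa$ open sets, hence a $G_\kappa$ set, so $X^c_\kappa$ is coarser than $X_\kappa$. For the reverse inclusion I would show that every basic $G_\kappa$ set contains, around each of its points, a $G^c_\kappa$ set. Fix a $G_\kappa$ set $G = \bigcap_{\alpha < \kappa} U_\alpha$ and a point $x \in G$. Using regularity, for each $\alpha < \kappa$ I would build a decreasing sequence of open sets $V^0_\alpha \supseteq V^1_\alpha \supseteq \cdots$ with $x \in V^n_\alpha$, with $\overline{V^{n+1}_\alpha} \subseteq V^n_\alpha$, and with $\overline{V^0_\alpha} \subseteq U_\alpha$. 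Then for each fixed $\alpha$ one has $\bigcap_n V^n_\alpha = \bigcap_n \overline{V^n_\alpha}$, and consequently the set $W = \bigcap_{\alpha < \kappa} \bigcap_{n < \omega} V^n_\alpha = \bigcap_{\alpha < \kappa} \bigcap_{n < \omega} \overline{V^n_\alpha}$ is a $G^c_\kappa$ set, the indexing family having size $\kappa \cdot \omega = \kappa$. Since $x \in W \subseteq \bigcap_{\alpha < \kappa} V^0_\alpha \subseteq \bigcap_{\alpha < \kappa} U_\alpha = G$, this exhibits the desired $G^c_\kappa$ neighbourhood and yields $X_\kappa \subseteq X^c_\kappa$.

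There is no genuine obstacle here: all the substantive work, including the $\kappa$-closed elementary submodel argument and the uses of initial $\kappa$-compactness and of $wL_c(X) \leq \kappa$, has already been carried out in the proof of Theorem \ref{wLtheorem}. The corollary is a purely formal consequence, and the only point requiring even a line of justification is the standard regularity ``sandwiching'' argument above, which the remark preceding the theorem dismisses as clear.
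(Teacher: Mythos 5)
Your proposal is correct and follows exactly the paper's (implicit) route: the corollary is stated without proof precisely because it is Theorem \ref{wLtheorem} combined with the remark that $X^c_\kappa = X_\kappa$ for regular $X$. Your verification of that remark via the nested sequences $V^0_\alpha \supseteq \overline{V^1_\alpha} \supseteq V^1_\alpha \supseteq \cdots$ is a correct filling-in of the detail the paper labels as clear.
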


\begin{corollary}
Let $X$ be a normal initally $\kappa$-compact space such that $wL(X) t(X) \leq \kappa$. Then $wL(X_\kappa) \leq 2^\kappa$.
\end{corollary}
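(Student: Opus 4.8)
The plan is to deduce this statement directly from Theorem \ref{wLtheorem} by translating its hypotheses into the form required there. First I would invoke the two observations recorded just before Theorem \ref{wLtheorem}: that $wL_c(X)=wL(X)$ for every normal space $X$, and that $X^c_\kappa=X_\kappa$ whenever $X$ is regular. The role of these two facts is precisely to bridge the gap between the ``closed'' versions of the invariants ($wL_c$, $G^c_\kappa$) appearing in Theorem \ref{wLtheorem} and the ``plain'' versions ($wL$, $G_\kappa$) appearing in the present statement.

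Next I would note that, since all spaces in this paper are assumed Hausdorff, a normal space $X$ is $T_4$ and hence regular; therefore the second observation applies and yields $X^c_\kappa=X_\kappa$. Simultaneously, the first observation rewrites the hypothesis $wL(X)\,t(X)\leq\kappa$ as $wL_c(X)\,t(X)\leq\kappa$, which is exactly the cardinal inequality $t(X)\,wL_c(X)\leq\kappa$ demanded in Theorem \ref{wLtheorem}. Since $X$ is by assumption initially $\kappa$-compact, all the hypotheses of Theorem \ref{wLtheorem} are now met, so that theorem gives $wL(X^c_\kappa)\leq 2^\kappa$. Substituting $X^c_\kappa=X_\kappa$ finishes the proof.

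There is no substantial obstacle here: the corollary is a routine specialization of Theorem \ref{wLtheorem}, and the entire content of the argument is the bookkeeping that converts the normality hypothesis into the regularity and $wL_c$-equality needed to identify the two topologies and the two weak Lindel\"of numbers. The single point deserving a word of care is that normality alone does not force regularity, and one genuinely uses the standing Hausdorff convention to promote ``normal'' to ``$T_4$'' (hence regular) before applying $X^c_\kappa=X_\kappa$.
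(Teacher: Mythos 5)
Your proof is correct and follows exactly the route the paper intends: the corollary is stated without proof precisely because it is the immediate specialization of Theorem \ref{wLtheorem} via the two facts that $wL_c(X)=wL(X)$ for normal spaces and $X^c_\kappa=X_\kappa$ for regular spaces. Your added remark that the standing Hausdorff assumption is what promotes normality to regularity is a worthwhile point of care and is exactly the right justification.
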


Note that all assumptions are essential in Theorem $\ref{wLtheorem}$:

\begin{enumerate}

\item Let $\kappa$ be a cardinal of uncountable cofinality. To find countably compact spaces of countable tightness $X$ such that $wL(X_\delta)$ can be arbitrarily large, let $X=\{\alpha < \kappa: cf(\alpha)=\aleph_0 \}$, with the topology induced by the order topology on $\kappa$.

\item To find regular spaces $X$ such that $wL_c(X) t(X)=\aleph_0$ and yet $wL(X_\delta)$ can be arbitrarily large, let $X$ be Uspenskij's example of a $\sigma$-closed discrete dense subset of a $\sigma$-product of $\kappa$ many copies of the unit interval from \cite{U}. Being dense in $\mathbb{I}^\kappa$, the space $X$ has the countable chain condition and hence $wL_c(X)=\aleph_0$. The tightness of $X$ is countable because a $\sigma$-product of intervals is even Fr\'echet-Urysohn and $\sigma$-closed discrete implies points $G_\delta$, hence $X_\delta$ is discrete. Since a $\sigma$-product of intervals has density and cardinality $\kappa$ we actually have $wL(X_\delta)=\kappa$.

\item An example of a compact Hausdorff (and hence countably compact normal) space such that $wL_c(X)=\omega$ and $wL(X_\delta) > \mathfrak{c}$ is provided by Corollary $\ref{maincor}$.
\end{enumerate}

\begin{question}
Is there a countably compact normal weakly Lindel\"of space of countable tightness such that $L(X_\delta) > 2^{\aleph_0}$.
\end{question}

Recall that the \emph{multiplicity} of a base $\mathcal{B}$ is the minimum cardinal $\kappa$ such that for every point $x \in X$, the set $\{B \in \mathcal{B}: x \in B \}$ has cardinality at most $\kappa$.

\begin{theorem}
Let $X$ be a space such that $L(X)=\kappa$ and $X$ has a base of multiplicity $2^\kappa$. Then $L(X_\kappa) \leq 2^{\kappa}$.
\end{theorem}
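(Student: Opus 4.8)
The plan is to run an elementary submodel reflection, producing a subcover of the form $\mathcal{F}\cap M$. Fix a $\kappa$-closed $M\prec H(\theta)$ with $X,\mathcal{B},\mathcal{F}\in M$, $2^\kappa+1\subseteq M$ and $|M|=2^\kappa$ (possible since $(2^\kappa)^\kappa=2^\kappa$). Since $|\mathcal{F}\cap M|\leq|M|=2^\kappa$, it suffices to prove that $\mathcal{F}\cap M$ covers $X$. Before doing so I would replace $\mathcal{F}$ by a canonical refinement: for each $F\in\mathcal{F}$, writing $F=\bigcap_{\alpha<\kappa}U^F_\alpha$ with $U^F_\alpha$ open, and for each $x\in F$ choosing $B^x_\alpha\in\mathcal{B}$ with $x\in B^x_\alpha\subseteq U^F_\alpha$, the sets $\bigcap_{\alpha<\kappa}B^x_\alpha$ form a $G_\kappa$-cover by intersections of $\kappa$ members of $\mathcal{B}$ that refines $\mathcal{F}$; any $2^\kappa$-sized subcover of the refinement yields one of $\mathcal{F}$ of the same size, so I may assume every $F\in\mathcal{F}$ has the form $\bigcap_{\alpha<\kappa}B^F_\alpha$ with $B^F_\alpha\in\mathcal{B}$, and fix such a representation inside $M$.

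The easy half is to cover $X\cap M$. For $x\in X\cap M$ the family $\mathcal{B}_x=\{B\in\mathcal{B}:x\in B\}$ is definable from the parameters $x,\mathcal{B}\in M$ and, by the hypothesis on the multiplicity of $\mathcal{B}$, has size at most $2^\kappa$; since $2^\kappa+1\subseteq M$ this forces $\mathcal{B}_x\subseteq M$. Choosing $F\ni x$ in the canonical cover, each $B^F_\alpha$ lies in $\mathcal{B}_x\subseteq M$, so by $\kappa$-closedness the sequence $\langle B^F_\alpha:\alpha<\kappa\rangle$ and the set $N=\bigcap_{\alpha<\kappa}B^F_\alpha$ belong to $M$. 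As $H(\theta)\models(\exists G\in\mathcal{F})(N\subseteq G)$ and $N\in M$, elementarity yields $G\in\mathcal{F}\cap M$ with $x\in N\subseteq G$. Hence $X\cap M\subseteq\bigcup(\mathcal{F}\cap M)$.

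The substance of the argument is to cover the points of $X\setminus M$, and this is where $L(X)=\kappa$ enters. The mechanism I would use is the standard reflection of open covers: if $\mathcal{G}\in M$ is an open cover of $X$, then by $L(X)=\kappa$ and elementarity $M$ contains a $\leq\kappa$-sized subcover $\mathcal{G}'$, which is then a subset of $M$ covering $X$; in particular $\mathcal{G}\cap M$ covers $X$. For each $\alpha<\kappa$ the family $\{B^F_\alpha:F\in\mathcal{F}\}\in M$ is an open cover of $X$, so these reflections furnish, around an arbitrary $x\in X$, $M$-coded basic open sets containing $x$. The goal is to splice $\kappa$ of them into a family $\mathcal{A}\in[\mathcal{B}\cap M]^{\leq\kappa}$ with $x\in\bigcap\mathcal{A}$ and $\bigcap\mathcal{A}\subseteq G$ for a single $G\in\mathcal{F}$; once this is achieved, $\kappa$-closedness puts $\mathcal{A}$ and $\bigcap\mathcal{A}$ into $M$ and the reflection of the preceding paragraph again produces $G\in\mathcal{F}\cap M$ with $x\in G$.

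The hard part is precisely this splicing for points outside $M$. The coordinate-wise reflections are not immediately compatible: for each $\alpha$ they select an $M$-coded basic neighborhood of $x$, but a priori coming from different members of $\mathcal{F}$, whereas I need $\kappa$ neighborhoods whose intersection is trapped inside one member. I expect to obtain them by a closing-off of length $\kappa$ in which the membership relation $x\in B^F_\alpha$ is used to keep the chosen basic sets nested and the hypothesis on the multiplicity of $\mathcal{B}$ is used to bound, at the point $x$, the supply of relevant basic sets so that the construction can be carried out entirely within the $\kappa$-closed model $M$. Synchronizing these $\kappa$ choices into a single member of $\mathcal{F}$, rather than merely covering $x$ by a union of pieces of distinct members, is the main obstacle, and it is the step where both the Lindel\"of degree and the multiplicity of the base are indispensable.
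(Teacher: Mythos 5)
Your first two paragraphs are fine (the canonical refinement is harmless, and the capture of $\mathcal{B}_x$ for $x\in X\cap M$ is correct), but the proposal stops being a proof exactly where you say it does: the ``splicing'' of the $\kappa$ coordinatewise reflections into a single member of $\mathcal{F}$ is never carried out, only conjectured to follow from an unspecified closing-off argument. That is a genuine gap, and I do not see how to close it along your lines: for a point $x$ far from $M$, the cover reflections give you, for each $\alpha$, some $M$-coded basic set of type $\alpha$ containing $x$, but these come from unrelated members of $\mathcal{F}$ and nothing forces their intersection inside any one member. The paper's proof shows that this synchronization problem should be avoided rather than solved, in two steps. First, strengthen your ``easy half'' from $X\cap M$ to $\overline{X\cap M}$: given $x\in\overline{X\cap M}$ and $U=\bigcap_{\alpha<\kappa}U_\alpha\in\mathcal{U}$ containing $x$, choose $B_\alpha\in\mathcal{B}$ with $x\in B_\alpha\subset U_\alpha$; each $B_\alpha$ is a neighbourhood of $x$ and so contains some $x_\alpha\in X\cap M$, and the multiplicity hypothesis applied at $x_\alpha$ (not at $x$!) gives $B_\alpha\in\{B\in\mathcal{B}:x_\alpha\in B\}\subset M$. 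Here all $\kappa$ basic sets are chosen around the same point inside the same $U$, so the synchronization you worry about is automatic; $\kappa$-closedness puts $B=\bigcap_{\alpha<\kappa}B_\alpha$ in $M$ and elementarity produces $G\in\mathcal{U}\cap M$ with $x\in B\subset G$.

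Second, points not in $\overline{X\cap M}$ are handled by contradiction, not directly. Suppose $p\in X\setminus\bigcup(\mathcal{U}\cap M)$. For each $x\in\overline{X\cap M}$ take $U_x\in\mathcal{U}\cap M$ containing $x$, write $U_x=\bigcap_{\alpha<\kappa}U^x_\alpha$ with the witnessing sequence in $M$ (hence, as $\kappa+1\subset M$, with each $U^x_\alpha\in M$), and pick $\alpha_x$ with $p\notin U^x_{\alpha_x}$. The sets $U^x_{\alpha_x}$ form an open cover of the closed set $\overline{X\cap M}$ by members of $M$, all missing $p$; since $L(\overline{X\cap M})\leq L(X)=\kappa$, there is a $\kappa$-sized subfamily $\mathcal{C}$ covering $\overline{X\cap M}$, and $\kappa$-closedness gives $\mathcal{C}\in M$. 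Since $X\cap M\subset\bigcup\mathcal{C}$, the model $M$ believes that $\mathcal{C}$ covers $X$, so by elementarity it really does, contradicting $p\notin\bigcup\mathcal{C}$. Note that this step applies the Lindel\"of hypothesis to the closed set $\overline{X\cap M}$, which is precisely why the first step must cover the closure: your version, which covers only $X\cap M$, would not support this argument, since the Lindel\"of number is inherited by closed subspaces but not by arbitrary ones.
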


\begin{proof}
Fix a base $\mathcal{B}$ for $X$ having multiplicity $\kappa$ and let $\mathcal{U}$ be a cover of $X$ by $G_\kappa$ sets.

Let $\theta$ be a large enough regular cardinal and let $M \prec H(\theta)$ be a $\kappa$-closed elementary submodel of cardinality $2^\kappa$ such that $X, \mathcal{B}, \mathcal{U} \in M$ and $2^\kappa+1 \subset M$.

{\bf Claim.} $\mathcal{U} \cap M$ covers $\overline{X \cap M}$.

\begin{proof}[Proof of Claim]
Fix $x \in \overline{X \cap M}$ and let $U \in \mathcal{U}$ be such that $x \in U$. Let $\{U_\alpha: \alpha < \kappa \}$ be a sequence of open sets such that $U=\bigcap \{U_\alpha: \alpha < \kappa \}$.  For every $\alpha < \kappa$ Let $B_\alpha \in \mathcal{B}$ be such that $x \in B_\alpha \subset U_\alpha$. Fix $x_\alpha \in B_\alpha \cap M$ and note that $\{B \in \mathcal{B}: x_\alpha \in B\}$ is an element of $M$ and has size $2^\kappa$. Hence $\{B \in \mathcal{B}: x_\alpha \in B\} \subset M$. It follows that $B_\alpha \in M$ for every $\alpha < \kappa$ and thus $\bigcap_{\alpha<\kappa} B_\alpha \in M$, as $M$ is $\kappa$-closed. Set $B:=\bigcap_{\alpha < \kappa} B_\alpha$. Note that $H(\theta) \models (\exists U)(U \in \mathcal{U} \wedge B \subset U)$. Since $B \in M$, by elementarity we have $M \models (\exists U)(U \in \mathcal{U} \wedge B \subset U)$, or equivalently, there is $U \in \mathcal{U} \cap M$ such that $B \subset U$. Since $x \in B \subset U$ we get what we wanted.
\renewcommand{\qedsymbol}{$\triangle$}
\end{proof}

Let us now prove that $\mathcal{U} \cap M$ actually covers $X$.

Suppose this were not true and let $p \in X \setminus \bigcup (\mathcal{U} \cap M)$. For every $x \in \overline{X \cap M}$, let $U_x \in \mathcal{U} \cap M$ be such that $x \in U_x$. For every $x \in \overline{X \cap M}$, we can find a sequence of open sets $\{U^x_\alpha: \alpha < \kappa \} \in M$ such that $U_x=\bigcap \{U^x_\alpha: \alpha <\kappa \}$. Since $\kappa +1 \subset M$ we actually have $\{U^x_\alpha: \alpha < \kappa \} \subset M$. For every $x \in X \cap M$, there is $\alpha_x < \kappa$ such that $p \notin U^x_{\alpha_x}$. Finally $\mathcal{V}:=\{U^x_{\alpha_x}: x \in \overline{X \cap M} \}$ is an open cover of the subspace $\overline{X \cap M}$, which has Lindel\"of number $\kappa$, and hence there is a $\kappa$-sized $\mathcal{C} \subset \mathcal{V}$ such that $X \cap M \subset \overline{X \cap M} \subset \bigcup \mathcal{C}$. But, since $\mathcal{C} \in M$ by $\kappa$-closedness, this implies that $M \models X \subset \bigcup \mathcal{C}$ and hence $H(\theta) \models X \subset \bigcup \mathcal{C}$, which is a contradiction because $p \notin \bigcup \mathcal{C}$.

\end{proof}

\begin{corollary}
Let $X$ be a Lindel\"of space with a point-countable base. Then $L(X_\delta) \leq 2^{\aleph_0}$.
\end{corollary}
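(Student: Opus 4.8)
The plan is to deduce this statement as the special case $\kappa = \aleph_0$ of the preceding theorem, so the entire task reduces to checking that the hypotheses of that theorem are met and to matching up the two notions of $G_\delta$-modification. First I would observe that, by definition, $X_{\aleph_0}$ is the topology generated by the $G_{\aleph_0}$-subsets of $X$, that is, the intersections of countable families of open sets; these are precisely the $G_\delta$ subsets of $X$, so $X_{\aleph_0} = X_\delta$ and it suffices to bound $L(X_{\aleph_0})$.

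Next I would verify the two hypotheses of the theorem with $\kappa = \aleph_0$. Since $X$ is Lindel\"of we have $L(X) = \aleph_0 = \kappa$. For the multiplicity hypothesis, recall that a point-countable base is exactly a base $\mathcal{B}$ of multiplicity $\aleph_0$: each point lies in at most countably many members of $\mathcal{B}$. As $\aleph_0 \leq 2^{\aleph_0}$, the same base $\mathcal{B}$ also has multiplicity at most $2^{\aleph_0} = 2^\kappa$, which is precisely what the theorem asks for. (Note that the proof of the theorem only uses that, for the relevant points $x$, the family $\{B \in \mathcal{B}: x \in B\}$ has cardinality at most $2^\kappa$, in order to conclude that it is a subset of the elementary submodel $M$; so multiplicity at most $2^\kappa$ is exactly the working hypothesis.) Applying the theorem therefore yields $L(X_\delta) = L(X_{\aleph_0}) \leq 2^{\aleph_0}$, as desired.

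There is no genuine obstacle here beyond this routine bookkeeping: the whole content of the corollary is already contained in the preceding theorem, and the only points requiring a word of care are the identification $X_{\aleph_0} = X_\delta$ and the observation that a base of multiplicity $\aleph_0$ qualifies as a base of multiplicity $2^{\aleph_0}$.
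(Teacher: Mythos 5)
Your proposal is correct and is exactly the paper's intended argument: the corollary is the case $\kappa=\aleph_0$ of the preceding theorem, which the paper states without further proof. Your handling of the only delicate point --- that a base of multiplicity $\aleph_0$ satisfies the theorem's multiplicity-$2^\kappa$ hypothesis (read as ``at most $2^\kappa$'', which is all the proof of the theorem uses) --- is also the right reading of the paper.
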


\section{Applications to homogeneous spaces}

De la Vega's theorem \cite{De} states that the cardinality of every compact homogeneous space of countable tightness is at most the continuum. 

We will use the results from Section 3 to extend De La Vega's theorem to the realm of countably compact spaces. It's not enough to replace compact with countably compact. Indeed, let $\kappa$ be an arbitrary cardinal and consider $2^\kappa$ with the usual topology and let $X \subset 2^\kappa$ be the subspace of all functions of countable support. It is easy to see that $X$ is countably compact and $X$ is well known to have countable tightness (it is even Fr\'echet-Urysohn). The space $X$ is homogeneous because it is a topological group with respect to coordinatewise addition mod 2, yet $|X|=\kappa^\omega$.

A space $X$ is called \emph{power-homogeneous} if there is a cardinal $\kappa$ such that $X^\kappa$ is homogeneous.

\begin{lemma} \label{lemhom}
(Ridderbos, \cite{R}) Let $X$ be a Hausdorff power-homogeneous space. Then $|X| \leq (d(X))^{\pi \chi(X)}$.
\end{lemma}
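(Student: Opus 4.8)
The plan is to run an elementary submodel argument in the spirit of Section 3, building a single model $M$ so large that $X\subseteq M$, which immediately gives $|X|\le|M|=d(X)^{\pi\chi(X)}$. Write $\tau=\pi\chi(X)$ (which we may assume infinite) and $\lambda=d(X)^{\tau}$. Since $\lambda^{\tau}=d(X)^{\tau\cdot\tau}=\lambda$, there is a $\tau$-closed elementary submodel $M\prec H(\theta)$ with $X\in M$, $|M|=\lambda$ and $\lambda+1\subseteq M$. By elementarity $M$ contains a dense $D\subseteq X$ with $|D|=d(X)$, and after arranging $d(X)+1\subseteq M$ the quoted fact (``$\kappa+1\subseteq M$, $S\in M$, $|S|\le\kappa$ imply $S\subseteq M$'') gives $D\subseteq M$. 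Hence $X\cap M\supseteq D$ is dense in $X$, and the whole statement reduces to showing that $X\cap M$ is \emph{closed}: for then $X\cap M=\overline{X\cap M}=X$.

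To attack closedness, fix $p\in\overline{X\cap M}$; since $p\in X$ it suffices to prove $p\in M$. Choose a local $\pi$-base $\mathcal V$ at $p$ with $|\mathcal V|\le\tau$ (possible as $\pi\chi(p,X)\le\tau$). Because $X\cap M$ is dense, every nonempty open set meets it, so for each $V\in\mathcal V$ we may pick $s_V\in V\cap X\cap M$. The set $S=\{s_V:V\in\mathcal V\}$ then lies in $M$, has size at most $\tau$, and satisfies $p\in\overline S$: any neighbourhood $W$ of $p$ contains some $V\in\mathcal V$, hence the point $s_V\in S$. Since $M$ is $\tau$-closed we get $S\in M$, and therefore $\overline S\in M$. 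Consequently it would be enough to know that $|\overline S|\le\lambda$, for then $\overline S\in M$ together with $\lambda+1\subseteq M$ forces $\overline S\subseteq M$, whence $p\in M$. In short, the lemma reduces to the following \emph{closure bound}: whenever $S\subseteq X$ with $|S|\le\pi\chi(X)$, one has $|\overline S|\le d(X)^{\pi\chi(X)}$.

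The closure bound is the heart of the matter and the place where power-homogeneity is unavoidable. The naive idea of viewing $\overline S$ as a space of density at most $\tau$ and applying the lemma recursively fails, because closed subspaces of power-homogeneous spaces need not be power-homogeneous, so one cannot invoke $|\overline S|\le d(\overline S)^{\pi\chi(\overline S)}$ from within $\overline S$. Instead I would use that $X^{\kappa}$ is homogeneous for some $\kappa$: fix once and for all a base point $e\in X$ together with a local $\pi$-base $\mathcal C$ at $e$ of size $\tau$, and use homeomorphisms of $X^{\kappa}$ to transport $\mathcal C$ onto a local $\pi$-base at an arbitrary $y\in\overline S$ (note this also shows $\pi\chi(\cdot,X)$ is constant across $X$). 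Selecting, for each transported base element, a single point of the dense set $D$ it contains should encode $y$ by an element of $D^{\tau}$, and counting these encodings caps $|\overline S|$ at $d(X)^{\tau}=\lambda$.

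The step I expect to be hardest is making this encoding both \emph{definable} and \emph{injective}. First, a homeomorphism of $X^{\kappa}$ carrying $e$ to a point outside $M$ is itself outside $M$, so the transport cannot be read off inside the model in a naive way; this should be handled by reflecting the entire parametrized homogeneity structure of $X^{\kappa}$ into $M$, rather than any single homeomorphism, so that the relevant $\pi$-base data of each $y$ become definable from parameters in $M\cup\{y\}$. Second, one must record single points of $D$ rather than full traces on $D$: recording traces would only yield the weaker bound $2^{d(X)\tau}$, so the passage to the sharp exponent $d(X)^{\tau}$ rests on a genuine injectivity argument, using Hausdorffness together with the abundance of homeomorphisms supplied by the power to separate distinct points of $\overline S$ by their chosen representatives. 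Establishing this injective, model-definable encoding is precisely the technical core isolated by Ridderbos; once the closure bound is in hand, the density-plus-closedness argument above completes the proof.
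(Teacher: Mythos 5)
The paper does not actually prove this lemma: it is quoted from Ridderbos \cite{R} and used as a black box, so your attempt has to stand on its own. The first half of your argument is fine: the elementary submodel reduction (take a $\tau$-closed $M$ of size $\lambda=d(X)^\tau$ with $\lambda+1\subseteq M$, note that $X\cap M$ contains a dense set, and observe that closedness of $X\cap M$ would follow from the bound $|\overline S|\le\lambda$ for every $S\subseteq X$ with $|S|\le\tau$) is correct. But that ``closure bound'' is not a preliminary step --- it is essentially the whole theorem, and your sketch of it has two concrete failures.

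First, the transport step does not work as described. A homeomorphism of $X^\kappa$ moves local $\pi$-bases of $X^\kappa$, not of $X$, and a local $\pi$-base at a point of $X^\kappa$ must have size at least $\kappa$ when $\kappa$ is infinite and $X$ is nontrivial: the supports of its members must be cofinal in $[\kappa]^{<\omega}$, so $\pi\chi(X^\kappa)=\kappa\cdot\pi\chi(X)$. (In particular, $\pi_0^{-1}[\mathcal C]$ is \emph{not} a local $\pi$-base at the constant point $\bar e$, so there is nothing of size $\tau$ to transport.) Transporting a genuine $\pi$-base of $X^\kappa$ at $\bar e$ and projecting back to a coordinate produces a local $\pi$-base at $y$ of size $\kappa\cdot\tau$, so your encoding lands in $D^{\kappa\cdot\tau}$ and yields only $|\overline S|\le d(X)^{\kappa\cdot\tau}$, which is useless because $\kappa$ is not under control. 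For the same reason, your parenthetical claim that the transport ``also shows $\pi\chi(\cdot,X)$ is constant across $X$'' is unjustified; that statement is itself one of the nontrivial facts about power-homogeneous spaces that Ridderbos must prove. Second, the injectivity of the encoding cannot come from Hausdorffness plus an unspecified appeal to the ``abundance of homeomorphisms'': members of a local $\pi$-base need not contain the point they approximate, and distinct points can share all their chosen representatives. Indeed, the inequality $|X|\le d(X)^{\pi\chi(X)}$ is consistently false for compact Hausdorff spaces --- under $\diamondsuit$, Fedorchuk's hereditarily separable compactum has $d(X)=\pi\chi(X)=\omega$ (using Shapirovskii's inequality $\pi\chi\le t$ for compacta) but cardinality $2^{\mathfrak c}>\omega^\omega$ --- so any correct injectivity argument must use power-homogeneity in an essential, worked-out way. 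You acknowledge both issues yourself and defer them to ``the technical core isolated by Ridderbos''; that self-assessment is accurate, but it means what you have is a correct reduction plus a restatement of the hard part, not a proof.
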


The following lemma can be proved by modifying slightly the proof of a result of Shapirovskii (see \cite{Ju}, 3.14).

\begin{lemma} \label{lempichi}
Let $X$ be an initially $\kappa$-compact space such that $F(X) \leq \kappa$. Then $\pi \chi(X) \leq \kappa$.
\end{lemma}

We say that a space $X$ has $G_\kappa$ density at most $\kappa$ at the point $x \in X$ if there is a $G_\kappa$ subset $G$ of $X$ containing $x$ such that $d(G) \leq \kappa$.

Lemmas $\ref{lemarhan2}$ and $\ref{lemarhan}$ are essentially due to Arhangel'skii (see \cite{A3} and \cite{J}, 3.12 for the proofs of closely related statements).

\begin{lemma} \label{lemarhan2}
Let $X$ be an initially $\kappa$-compact regular space such that $F(X) \leq \kappa$. Then $t(X) \leq \kappa$.
\end{lemma}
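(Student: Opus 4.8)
The plan is to prove the contrapositive: assuming $t(X)>\kappa$, I will manufacture a free sequence of length $\kappa^+$, which contradicts the hypothesis $F(X)\le\kappa$ (recall that free sequences always witness tightness, so $F(X)\le t(X)$ in any space; the content of the lemma is the reverse inequality, which is where initial $\kappa$-compactness and regularity must enter). Since $t(X)>\kappa$, I would first fix a set $A\subseteq X$ and a point $p\in\overline{A}$ such that $p\notin\overline{B}$ for every $B\in[A]^{\le\kappa}$. A useful preliminary observation to record is that this forces every neighbourhood $W$ of $p$ to meet $A$ in more than $\kappa$ points: otherwise $B=W\cap A$ would be a $\le\kappa$-sized subset of $A$ with $p\in\overline{B}$, contradicting the choice of $p$.

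The engine of the construction is a transfinite recursion building points $x_\alpha\in A$ together with open neighbourhoods $U_\alpha$ of $p$ for $\alpha<\kappa^+$. At each stage the set $D_\alpha=\{x_\beta:\beta<\alpha\}$ has cardinality $\le\kappa$, so $p\notin\overline{D_\alpha}$, and regularity then supplies an open $U_\alpha\ni p$ with $\overline{U_\alpha}\cap\overline{D_\alpha}=\emptyset$; since $U_\alpha$ is a neighbourhood of $p\in\overline{A}$, one can choose $x_\alpha\in A\cap U_\alpha$. The point of the bookkeeping is to arrange the neighbourhoods so that, for each split $\delta<\kappa^+$, the entire tail $\{x_\beta:\beta\ge\delta\}$ is trapped inside $U_\delta$, whose closure misses $\overline{D_\delta}$ by construction. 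Granting this, freeness is immediate: $\overline{\{x_\beta:\beta\ge\delta\}}\subseteq\overline{U_\delta}$ is disjoint from $\overline{\{x_\beta:\beta<\delta\}}=\overline{D_\delta}$ for every $\delta$, so $\langle x_\alpha:\alpha<\kappa^+\rangle$ is a free sequence of length $\kappa^+>\kappa\ge F(X)$, the desired contradiction.

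The hard part will be carrying the recursion through limit stages $\alpha$, all of which have $\mathrm{cf}(\alpha)\le\kappa$ and at which the trapping requirement wants $U_\alpha$ to sit inside all the earlier neighbourhoods. The naive obstruction is that $\bigcap_{\gamma<\alpha}U_\gamma$, an intersection of up to $\kappa$ neighbourhoods of $p$, need not be a neighbourhood of $p$, so the descending system of neighbourhoods can degenerate; indeed the ordinal space $\kappa^{+}+1$ (or example (1) following Theorem \ref{wLtheorem}) shows that without a compactness hypothesis the whole scheme collapses. This is precisely the step where initial $\kappa$-compactness is indispensable and is the part of the argument genuinely inherited from Arhangel'skii. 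The mechanism I would use is the equivalent form of initial $\kappa$-compactness stating that a family of at most $\kappa$ closed sets with the finite intersection property has nonempty intersection: applied to the closed neighbourhoods $\{\overline{U_\gamma}:\gamma<\alpha\}$, all of which contain $p$, this keeps $\bigcap_{\gamma<\alpha}\overline{U_\gamma}$ nonempty and lets one pass to a complete accumulation point of the previously selected traces in order to refresh the neighbourhood system and keep each tail separated from its head. Making this selection simultaneously respect $x_\alpha\in A$, the trapping of the tail, and the separation $p\notin\overline{D_{\alpha}}$ is the technical core; once it is in place, the recursion goes through and the free sequence of length $\kappa^+$ is produced, completing the proof.
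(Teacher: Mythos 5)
Your skeleton (contrapositive, a free sequence of length $\kappa^+$ built by trapping tails inside closed neighbourhoods of the witnessing point $p$, regularity supplying the separation $\overline{U_\delta}\cap\overline{D_\delta}=\emptyset$) is indeed the classical Arhangel'skii argument; note the paper itself offers no proof of this lemma but cites Arhangel'skii and Juh\'asz 3.14/3.12 for closely related statements, so that skeleton is exactly what has to be fleshed out. The problem is that the one step you yourself flag as ``the technical core'' --- the limit stages, which is the only place initial $\kappa$-compactness enters --- is not actually carried out, and the mechanism you sketch for it does not work. First, applying the finite intersection property to $\{\overline{U_\gamma}:\gamma<\alpha\}$ is vacuous: these sets all contain $p$, so their intersection is nonempty without any compactness, and that tells you nothing, because what you need is a point of $A$ (not of $X$) lying in $\bigcap_{\gamma\le\alpha}\overline{U_\gamma}$, \emph{including} the freshly chosen $U_\alpha$, and $p\notin A$. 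Second, a complete accumulation point of ``the previously selected traces'' $D_\alpha=\{x_\beta:\beta<\alpha\}$ lies in $\overline{D_\alpha}$, which is precisely the set that $\overline{U_\alpha}$ was chosen to miss and that the tail must avoid; such a point can never serve as $x_\alpha$. Third, ``refreshing the neighbourhood system'' at a limit would destroy the trapping commitments $x_\beta\in U_\delta$ ($\delta\le\beta<\alpha$) already made, and with them freeness.

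Here is what is actually missing. First replace $A$ by $A'=\bigcup\{\overline{B}:B\in[A]^{\le\kappa}\}$; since $\kappa\cdot\kappa=\kappa$, the set $A'$ is $\kappa$-closed (closures of $\le\kappa$-sized subsets of $A'$ stay in $A'$), $p\in\overline{A'}$, and still $p\notin\overline{B}$ for every $B\in[A']^{\le\kappa}$. Then prove the key claim: \emph{if $\{U_\beta:\beta\le\alpha\}$ are $\le\kappa$ many open neighbourhoods of $p$, then $A'\cap\bigcap_{\beta\le\alpha}\overline{U_\beta}\neq\emptyset$.} Indeed, for each finite $F\subseteq\alpha+1$ pick $a_F\in A'\cap\bigcap_{\beta\in F}U_\beta$ (possible, since $\bigcap_{\beta\in F}U_\beta$ is an open neighbourhood of $p\in\overline{A'}$); the sets $C_F=\overline{\{a_{F'}:F'\supseteq F\}}$ form a family of $\le\kappa$ closed sets with the finite intersection property, all contained in $\overline{\{a_F: F \text{ finite}\}}\subseteq A'$ by $\kappa$-closedness, so initial $\kappa$-compactness gives a point $q\in\bigcap_F C_F$; then $q\in A'$ and $q\in\overline{U_\beta}$ for every $\beta\le\alpha$, since $a_{F'}\in U_\beta$ whenever $F'\ni\beta$. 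With this claim the recursion is uniform at all stages: having chosen $U_\alpha$ with $\overline{U_\alpha}\cap\overline{D_\alpha}=\emptyset$ (legitimate because $D_\alpha\in[A']^{\le\kappa}$, so $p\notin\overline{D_\alpha}$), take $x_\alpha\in A'\cap\bigcap_{\beta\le\alpha}\overline{U_\beta}$; trapping in the closed sets $\overline{U_\beta}$ is just as good for freeness as trapping in the $U_\beta$ themselves. No invariant of the form $p\in\overline{A\cap\bigcap_{\gamma<\alpha}U_\gamma}$ is needed, which is fortunate, because that invariant genuinely fails to propagate through limits --- this is the trap your outline falls into.
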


\begin{lemma} \label{lemarhan}
Let $X$ be an initially $\kappa$-compact regular space such that $F(X) \leq \kappa$. Then $X$ has $G_\kappa$-density at most $\kappa$ at some point.
\end{lemma}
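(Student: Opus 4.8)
The plan is to argue by contradiction, constructing a free sequence that is too long. Since $X$ is an initially $\kappa$-compact regular space with $F(X) \leq \kappa$, Lemma $\ref{lemarhan2}$ already gives $t(X) \leq \kappa$, and I would keep this in force throughout. The useful reformulation of the conclusion is: \emph{$X$ has $G_\kappa$-density $> \kappa$ at every point if and only if no nonempty $G_\kappa$ subset of $X$ admits a dense subset of size $\leq \kappa$.} Assuming the latter, I would build a free sequence of length $\kappa^+$, contradicting $F(X) \leq \kappa$. (One must be careful: the subtlety throughout is that a $G_\kappa$ set $G$ with $G \subseteq \overline{D}$, $|D|\le\kappa$, need \emph{not} have $d(G)\le\kappa$, since density does not pass to subspaces, so the witnessing dense set must be produced \emph{inside} $G$.)

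The construction carries, by recursion on $\alpha < \kappa^+$, points $x_\alpha$ together with a decreasing chain of nonempty closed ``reservoirs'' $C_\alpha$, maintaining the invariants $x_\alpha \in C_\alpha$, $C_\alpha \subseteq C_\beta$ for $\beta < \alpha$, and $\overline{D_\alpha} \cap C_\alpha = \emptyset$, where $D_\alpha = \{x_\beta : \beta < \alpha\}$. These invariants force freeness: for each $\beta$ the tail $\{x_\gamma : \gamma \geq \beta\}$ lies in $C_\beta$, whence $\overline{\{x_\gamma : \gamma \geq \beta\}} \subseteq C_\beta$, which is disjoint from $\overline{D_\beta}$. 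At a successor $\beta+1$, regularity lets me separate a further point of $C_\beta$ from the closed set $\overline{D_{\beta+1}} = \overline{D_\beta} \cup \{x_\beta\}$ by a closed neighbourhood $\overline{V_\beta}$ and set $C_{\beta+1} = C_\beta \cap \overline{V_\beta}$. At a limit $\lambda$ I would first form $C^-_\lambda = \bigcap_{\gamma < \lambda} C_\gamma$, which is nonempty: $\{C_\gamma : \gamma < \lambda\}$ is a family of at most $\kappa$ closed sets with the finite intersection property, and an initially $\kappa$-compact space has nonempty intersection for any such family.

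The main obstacle is precisely the limit step, namely restoring $\overline{D_\lambda} \cap C_\lambda = \emptyset$. Tightness disposes of limits of cofinality $> \kappa$ automatically: a point of $C^-_\lambda \cap \overline{D_\lambda}$ would, by $t(X) \leq \kappa$, lie in the closure of a $\leq\kappa$-sized, hence bounded, piece of $D_\lambda$, contradicting an earlier invariant. At limits of cofinality $\leq \kappa$, however, a cofinal $\leq\kappa$-sized subset of the head can accumulate back onto the reservoir, and disjointness may genuinely fail; here I can continue exactly when $C^-_\lambda \not\subseteq \overline{D_\lambda}$, picking a point of the difference and re-separating. Since a construction that never jammed would produce a free sequence of length $\kappa^+$ --- impossible as $F(X) \leq \kappa$ --- the recursion must jam at some limit $\lambda \leq \kappa$ with $\emptyset \neq C^-_\lambda \subseteq \overline{D_\lambda}$ (a successor jam forces $C_\beta = \{x_\beta\}$, a degenerate case handled separately).

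Converting this jamming into the conclusion is the delicate point, and it is where I expect to have to follow Arhangel'skii's original argument (cf. \cite{A3} and \cite{J}, 3.12). At the jam we have a nonempty closed set $C^-_\lambda$ sitting inside $\overline{D_\lambda}$ with $d(\overline{D_\lambda}) \leq |D_\lambda| \leq \kappa$; but, as noted, this does \emph{not} give $d(C^-_\lambda) \leq \kappa$ for free. The fix I would pursue is to run the entire recursion with the reservoirs chosen as closed $G_\kappa$ sets --- taking each $\overline{V_\beta}$ to be a closed $G_\kappa$ neighbourhood --- so that $C^-_\lambda = \bigcap_{\gamma<\lambda}\overline{V_\gamma}$ is itself a nonempty $G_\kappa$ set, while simultaneously enriching a fixed set $D^\ast \subseteq C^-_\lambda$ of size $\leq \kappa$ engineered to be dense in $C^-_\lambda$; then $C^-_\lambda$ is a nonempty $G_\kappa$ set of density $\leq \kappa$ and any of its points witnesses the conclusion. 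Establishing that these two demands --- closed $G_\kappa$ reservoirs and an internal dense set of size $\leq \kappa$ --- can be met together, using regularity, $t(X)\le\kappa$, and initial $\kappa$-compactness, is the hardest and most technical part of the argument.
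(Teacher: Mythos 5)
The paper itself offers no proof of this lemma: it is attributed to Arhangel'skii, with a pointer to \cite{A3} and to 3.12 of Juh\'asz for ``closely related statements.'' So your attempt has to stand on its own, and it does not. The scaffolding you set up is fine (the reformulation, the reservoir recursion, nonemptiness of $C^-_\lambda$ via initial $\kappa$-compactness, the observation that a successor jam yields a singleton), but the argument stops exactly where the lemma begins: at a limit jam you have a nonempty closed $G_\kappa$ set $C^-_\lambda$ with $C^-_\lambda \subseteq \overline{D_\lambda}$ and $|D_\lambda|\le\kappa$, and you yourself concede this does not give $d(C^-_\lambda)\le\kappa$. Your proposed repair --- maintain closed $G_\kappa$ reservoirs \emph{and} a set $D^\ast\subseteq C^-_\lambda$ ``engineered to be dense'' --- is announced, labelled the hardest part, and never carried out. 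That step is not a deferrable technicality; it is the entire content of the lemma. Indeed the jam conclusion is strictly weaker than the desired one even in the presence of all the hypotheses: let $X$ be the one-point compactification of a Mr\'owka space $\Psi(\mathcal{A})$ over an almost disjoint family with $|\mathcal{A}|=\mathfrak{c}$. Then $X$ is compact and countably tight, and $F=\mathcal{A}\cup\{\infty\}=X\setminus\omega$ is a nonempty closed $G_\delta$ with $F\subseteq\overline{\omega}$, yet $d(F)=\mathfrak{c}$ because every point of $\mathcal{A}$ is isolated in $F$. So ``a $G_\kappa$ set trapped in the closure of a $\le\kappa$-sized set'' cannot in general be converted into ``a $G_\kappa$ set of density $\le\kappa$''; any correct proof must arrange for the prospective dense set to lie \emph{inside} the $G_\kappa$ set from the outset.

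That is precisely what your design rules out, so the gap is structural, not cosmetic. Your freeness invariant $C_\alpha\cap\overline{D_\alpha}=\emptyset$ expels every chosen point from all later reservoirs: $x_\beta\in\overline{D_{\beta+1}}$ forces $x_\beta\notin C_{\beta+1}\supseteq C^-_\lambda$, hence $D_\lambda\cap C^-_\lambda=\emptyset$ at any limit jam, and the points you have built can never be ``enriched'' into a dense subset of $C^-_\lambda$. An auxiliary set $D^\ast$ would instead have to survive inside every member of a shrinking transfinite chain whose successor steps cut down to closed neighbourhoods of essentially arbitrary new points; guaranteeing that a fixed $\le\kappa$-sized set stays dense through such a chain is exactly the statement being proved, so the plan is circular. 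The two demands pull in opposite directions: freeness needs the tails (hence all later reservoirs) separated from $\overline{D_\gamma}$, while internal density needs $D_\lambda$ captured inside the limit set. In the arguments the paper points to (Arhangel'skii \cite{A3}; see also \cite{Ju}), the candidate $G_\kappa$ set is built the other way around --- as an intersection of open sets each containing the closure of all points chosen so far, so that those points automatically lie in, and at a jam are dense in, the resulting $G_\kappa$ set --- and the contradiction in the non-jamming case is then extracted by a more delicate mechanism than the head--tail separation you rely on. Two smaller slips: the jam need only occur at some limit $\lambda<\kappa^+$, not $\lambda\le\kappa$; and since every limit $\lambda<\kappa^+$ has cofinality $\le\kappa$, your case of limits of cofinality $>\kappa$ is vacuous.
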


\begin{lemma}
(Arhangelskii, van Mill and Ridderbos, \cite{AMR})
Let $X=\prod \{X_i: i \in I\}$. Suppose $X$ is homogeneous and for every $i \in I$, the $G_\kappa$-density of $X_i$ does not exceed $\kappa$ at some point. If for some $j \in I$, we have $\pi \chi(X_j) \leq \kappa$ then the $G_\kappa$-density of $X_j$ does not exceed $\kappa$ at all points of $X_j$.
\end{lemma}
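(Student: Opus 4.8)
The plan is to fix a coordinate $j \in I$ with $\pi\chi(X_j) \leq \kappa$ and a point $x_j \in X_j$ at which the $G_\kappa$-density of $X_j$ attains the value $\leq \kappa$, and then transport this witness to an arbitrary target point $y_j \in X_j$ using a homeomorphism of the whole product. Because $X$ is homogeneous, I would choose points $p, q \in X$ whose $j$-th coordinates are $x_j$ and $y_j$ respectively (any extension of $x_j$ and $y_j$ to full tuples will do, and the hypothesis that \emph{every} factor has $G_\kappa$-density $\leq \kappa$ at some point lets me pick the other coordinates of $p$ to be such witnesses). Let $h \colon X \to X$ be a homeomorphism with $h(p)=q$. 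The heart of the argument is that $h$ carries a small $G_\kappa$ witness around $p$ to a small $G_\kappa$ witness around $q$, and then one must \emph{project} this back down to the single factor $X_j$.

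The key steps, in order, are as follows. First I would build a $G_\kappa$ set $G \ni p$ in the product $X$ with $d(G) \leq \kappa$: take the witnessing $G_\kappa$ set in the factor $X_j$ at $x_j$, and in each of the remaining coordinates take the witnessing $G_\kappa$ set (of density $\leq\kappa$) guaranteed by hypothesis; since a $G_\kappa$ set in a product depends on only $\kappa$ many coordinates, one restricts to a subproduct of $\leq\kappa$ factors and uses that a product of $\leq\kappa$ many separable-in-the-$G_\kappa$-sense pieces has density $\leq \kappa$. Second, apply $h$: the image $h[G]$ is $G_\kappa$ in $X$, contains $q$, and still has density $\leq\kappa$ since $h$ is a homeomorphism. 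Third, and this is the delicate projection step, I must produce from $h[G]$ a $G_\kappa$ subset of the single factor $X_j$ witnessing density $\leq \kappa$ at $y_j$. The natural move is to intersect $h[G]$ with a suitable fiber (the set of points agreeing with $q$ off coordinate $j$) and project to $X_j$; the projection of a dense subset is dense, so the projected image has density $\leq\kappa$, and the key point is that this projection is an open $G_\kappa$-preserving map onto its image in the relevant sense.

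The main obstacle I anticipate is exactly that third step: a projection $\pi_j \colon X \to X_j$ is continuous and open, so it carries open sets to open sets and dense sets to dense sets, but it need \emph{not} carry a $G_\kappa$ set to a $G_\kappa$ set, because a countable (or $\kappa$-sized) intersection of open sets can project to something that is merely the projection of such an intersection, not itself a $G_\kappa$ set. Overcoming this is where the hypothesis $\pi\chi(X_j) \leq \kappa$ must be used: rather than literally projecting $h[G]$, I would fix a local $\pi$-base at $y_j$ in $X_j$ of size $\leq\kappa$, and use the density of (the projection of) $h[G]$ together with this $\pi$-base to manufacture directly a $G_\kappa$ subset of $X_j$ of density $\leq\kappa$ around $y_j$. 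Concretely, the $\pi$-character bound lets me shrink a neighborhood base at $y_j$ so that its intersection is $G_\kappa$, and the transported dense set supplies the $\leq\kappa$ dense points inside it; I would verify that the points of the projection that land in this shrunken $G_\kappa$ set remain dense in it.

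I would then conclude that $y_j$ — an arbitrary point of $X_j$ — has $G_\kappa$-density $\leq\kappa$, which is precisely the assertion that the $G_\kappa$-density of $X_j$ does not exceed $\kappa$ at all points, completing the proof.
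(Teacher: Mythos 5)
The paper itself does not prove this lemma; it quotes it from \cite{AMR}. Judged on its own, your proposal has a fatal gap at its very first step: you claim to build a $G_\kappa$ set $G \ni p$ in the full product $X$ with $d(G) \leq \kappa$, but under the hypotheses of the lemma no such set need exist at \emph{any} point of $X$. Every $G_\kappa$ subset of $\prod_{i \in I} X_i$ containing a point contains a standard cylinder $\prod_{i \in A} H_i \times \prod_{i \notin A} X_i$ with $|A| \leq \kappa$, and therefore projects \emph{onto} $\prod_{i \notin A} X_i$; since density does not increase under continuous surjections, $d(G) \geq d(\prod_{i \notin A} X_i)$. Your move of ``restricting to a subproduct of $\leq \kappa$ factors'' only controls the density of $\prod_{i \in A} G_i$ (where Hewitt--Marczewski--Pondiczery indeed gives $\leq \kappa$); the cylinder you actually need still carries the untouched tail product. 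Concretely, take $X = 2^\lambda$ with $\lambda = (2^\kappa)^+$: this is a homogeneous product whose factors are two-point discrete spaces, so all hypotheses of the lemma hold trivially (each factor has $G_\kappa$-density $1$ and $\pi$-character $1$ at every point), yet every $G_\kappa$ set in $X$ maps onto $2^{\lambda'}$ for some $\lambda' > 2^\kappa$ and hence has density $> \kappa$. So the witness you intend to transport by the homeomorphism $h$ does not exist, and your steps two through four have nothing to act on.

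The obstacle you flag in your third step (projections need not preserve $G_\kappa$ sets) is real, but your proposed repair does not work either: elements of a local $\pi$-base at $y_j$ need not contain $y_j$, so $\pi\chi(X_j) \leq \kappa$ does not let you ``shrink a neighborhood base so that its intersection is $G_\kappa$,'' and intersecting $\kappa$ many honest neighborhoods of $y_j$ gives a $G_\kappa$ set with no control whatsoever on its density. The known proof (in \cite{AMR}, in the style of van Mill's work on power-homogeneous compacta) runs in the opposite direction to yours: one pulls the $\pi$-base at $y_j$ back through $\pi_j \circ h$, chooses inside each (nonempty, open) preimage a nonempty basic open set, collects the $\leq \kappa$ many coordinates $A$ occurring in their supports, and then works with the face $\Sigma = \{x \in X : x(i) = p(i) \text{ for all } i \notin A\}$ through the witness point $p$. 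The $\leq \kappa$-sized dense set is manufactured from the witnessing sets $G_i$ and their dense subsets over the coordinates in $A$ only --- this is the place where Hewitt--Marczewski--Pondiczery legitimately applies --- and the $G_\kappa$ set in the factor $X_j$ is obtained as a preimage or trace under continuous maps, never as a forward image. In short, the correct argument neither needs nor can have a small-density $G_\kappa$ set in the whole product, whereas your plan is built entirely on that nonexistent object.
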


The above lemma, along with Lemma $\ref{lempichi}$ and Lemma $\ref{lemarhan}$ implies the following statement.

\begin{corollary} \label{phcor}
Let $X$ be a power-homogeneous initially $\kappa$-compact regular space such that $F(X) \leq \kappa$. Then the $G_\kappa$ density of $X$ does not exceed $\kappa$ at all points.
\end{corollary}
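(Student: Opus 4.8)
The plan is to assemble the three cited ingredients in the natural order dictated by the hypotheses of the product lemma. Since $X$ is power-homogeneous, fix $\kappa$ so that $X^\kappa$ is homogeneous; I will apply the product lemma to the product $\prod\{X_i : i \in \kappa\}$ where every factor $X_i$ equals $X$. The goal is to verify the hypotheses of the product lemma for this particular product and then read off the conclusion for a single factor, which is $X$ itself.

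First I would establish the tightness bound. By Lemma \ref{lemarhan2}, the hypotheses that $X$ is initially $\kappa$-compact regular with $F(X) \leq \kappa$ give $t(X) \leq \kappa$. I do not strictly need tightness for the final conclusion, but I do need two things that follow from the same hypotheses. Next, by Lemma \ref{lempichi}, the same hypotheses ($F(X)\le\kappa$, initial $\kappa$-compactness) yield $\pi\chi(X) \leq \kappa$; this supplies the crucial $\pi$-character hypothesis ``$\pi\chi(X_j)\le\kappa$'' of the product lemma for any chosen index $j$. Finally, by Lemma \ref{lemarhan}, $X$ has $G_\kappa$-density at most $\kappa$ \emph{at some point}; since every factor is a copy of $X$, this gives the hypothesis that each $X_i$ has $G_\kappa$-density at most $\kappa$ at some point.

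Having verified all three hypotheses of the product lemma, I would apply it with $X^\kappa$ in place of the homogeneous product (valid since $X^\kappa$ is homogeneous) and with $j$ an arbitrary fixed index. The conclusion of the product lemma is then that the factor $X_j = X$ has $G_\kappa$-density at most $\kappa$ at \emph{all} of its points, which is exactly the statement of Corollary \ref{phcor}.

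I expect no serious obstacle here, since the corollary is essentially a bookkeeping combination of results already in hand; the only point requiring a little care is the transition from power-homogeneity to a concrete homogeneous product to which the product lemma applies, and the observation that in that product every factor coincides with $X$, so that the per-factor hypothesis ``$G_\kappa$-density at most $\kappa$ at some point'' is uniformly supplied by the single application of Lemma \ref{lemarhan} to $X$. One should also confirm that the product lemma's hypothesis is about $G_\kappa$-density and not some finer variant, so that Lemma \ref{lemarhan}'s output matches the input format exactly; with the definitions as stated this match is immediate.
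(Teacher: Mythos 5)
Your proposal is correct and follows essentially the same route as the paper, whose entire proof is the remark that the corollary follows from the Arhangel'skii--van Mill--Ridderbos product lemma combined with Lemma \ref{lempichi} and Lemma \ref{lemarhan}, applied exactly as you do to a homogeneous power of $X$ with all factors equal to $X$. Two cosmetic points: the exponent witnessing power-homogeneity need not be the same cardinal $\kappa$ appearing in the hypotheses, so it should get a different letter, and (as you yourself note) the appeal to Lemma \ref{lemarhan2} is superfluous.
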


The following lemma uses an idea of Arhangel'skii from \cite{A4}.

\begin{lemma} \label{lemdensity}
Let $X$ be an initially $\kappa$-compact regular power-homogeneous space such that $F(X) \cdot wL_c(X) \leq \kappa$. Then $d(X) \leq 2^{\kappa}$.
\end{lemma}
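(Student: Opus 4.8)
The plan is to feed the weak Lindel\"of bound for the $G_\kappa$ topology supplied by Theorem~\ref{wLtheorem} into the pointwise $G_\kappa$-density estimate that power-homogeneity provides, and then amalgamate small local dense sets into a global one of size $2^\kappa$. First I would read the hypothesis $F(X)\cdot wL_c(X)\leq\kappa$ as the conjunction $F(X)\leq\kappa$ and $wL_c(X)\leq\kappa$. Because $X$ is initially $\kappa$-compact, regular, and $F(X)\leq\kappa$, Lemma~\ref{lemarhan2} gives $t(X)\leq\kappa$, so that $t(X)\cdot wL_c(X)\leq\kappa$. The regular-space corollary to Theorem~\ref{wLtheorem} then yields $wL(X_\kappa)\leq 2^\kappa$.

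Next I would apply Corollary~\ref{phcor}: since $X$ is power-homogeneous, initially $\kappa$-compact, regular, and $F(X)\leq\kappa$, the $G_\kappa$-density of $X$ does not exceed $\kappa$ at \emph{every} point. Hence for each $x\in X$ there is a $G_\kappa$ set $G_x$ with $x\in G_x$ and $d(G_x)\leq\kappa$. The family $\{G_x:x\in X\}$ covers $X$ by sets that are open in the topology $X_\kappa$, so the bound $wL(X_\kappa)\leq 2^\kappa$ produces a subfamily $\{G_{x_i}:i\in I\}$, with $|I|\leq 2^\kappa$, whose union is dense in $X_\kappa$.

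To finish, for each $i$ I would pick $D_i\subseteq G_{x_i}$ with $|D_i|\leq\kappa$ dense in the subspace $G_{x_i}$, and set $D=\bigcup_{i\in I}D_i$, so $|D|\leq 2^\kappa\cdot\kappa=2^\kappa$. It remains to check that $D$ is dense in $X$ in its \emph{original} topology. Density of $D_i$ in the subspace $G_{x_i}$ means $G_{x_i}\subseteq\overline{D_i}$ (closure taken in $X$), so $\bigcup_i G_{x_i}\subseteq\overline{D}$; and since $X_\kappa$ refines $X$, a set dense in $X_\kappa$ is dense in $X$, whence $\overline{\bigcup_i G_{x_i}}=X$. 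Therefore $\overline{D}=X$ and $d(X)\leq 2^\kappa$.

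The step requiring the most care is this last amalgamation, where three distinct notions of density interact: density inside a $G_\kappa$ subspace, density in the refined topology $X_\kappa$, and density in $X$ itself. Each linking implication---that subspace density of $D_i$ gives $G_{x_i}\subseteq\overline{D_i}$, and that $X_\kappa$-density descends to $X$-density---is elementary, but these are precisely the places where a careless argument could fail, so I would record them explicitly rather than leave them implicit.
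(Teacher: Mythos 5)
Your proof is correct, and it follows the paper's overall strategy---Corollary~\ref{phcor} for the pointwise $G_\kappa$-density, Theorem~\ref{wLtheorem} applied to the cover $\{G_x: x \in X\}$ to extract a $2^\kappa$-sized subfamily with $G_\kappa$-dense union, then amalgamation of local dense sets---but your final step is genuinely simpler than the paper's. The paper chooses each $D_x$ to be dense in $G_x$ \emph{in the $G_\kappa$ topology}, which forces an extra cardinal computation: the $G_\kappa$-density of $G_x$ is bounded by its $G_\kappa$-weight, which is at most $w(G_x)^\kappa \leq \left(2^{d(G_x)}\right)^\kappa = 2^\kappa$, where the inequality $w(G_x) \leq 2^{d(G_x)}$ uses regularity a second time; each piece then has size $2^\kappa$ and the union $D$ is dense in $X_\kappa$, hence in $X$. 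You instead take $D_i$ dense in $G_{x_i}$ in the ordinary subspace topology, which costs only $\kappa$ points per piece (directly from Corollary~\ref{phcor}) and needs no weight estimate; your closure argument---$G_{x_i} \subseteq \overline{D_i}$, hence $X = \overline{\bigcup_i G_{x_i}} \subseteq \overline{D}$, where density of $\bigcup_i G_{x_i}$ in the finer topology $X_\kappa$ descends to density in $X$---is airtight. The only thing the paper's heavier route buys is the (unneeded) stronger conclusion that $D$ is dense in $X_\kappa$ itself. Your write-up also makes explicit a step the paper leaves implicit: Theorem~\ref{wLtheorem} requires $t(X)\,wL_c(X) \leq \kappa$, so one must first pass from $F(X) \leq \kappa$ to $t(X) \leq \kappa$ via Lemma~\ref{lemarhan2} before invoking it.
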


\begin{proof}
By Corollary $\ref{phcor}$, we can choose, for every $x \in X$, a $G_\kappa$ set $G_x$ such that $x \in G_x$ and $d(G_x) \leq \kappa$. Note now that the $G_\kappa$-density of $G_x$ does not exceed its $G_\kappa$-weight, which in turn is at most $w(G_x)^\kappa$. By regularity of $X$, we have $w(G_x) \leq 2^{d(G_x)}$. Hence the density of $G_x$ in the $G_\kappa$ topology does not exced $2^\kappa$. Now $\{G_x: x \in X \}$ is a $G_\kappa$-cover of $X$ and hence by Theorem $\ref{wLtheorem}$ we can find $C \in [X]^{2^\kappa}$ such that $\bigcup \{G_x: x \in C\}$ is dense in $X_\kappa$. For every $x \in C$, fix a set $D_x$, dense in $G_x$ (in the $G_\kappa$ topology). Then $D=\bigcup \{D_x: x \in C\}$ is a dense subset of $X_\kappa$ of cardinality $2^\kappa$. Since the topology of $X_\kappa$ is finer than the topology of $X$, we have that $D$ is also dense in $X$ and hence $d(X) \leq 2^\kappa$. 
\end{proof}

\begin{theorem} \label{mainhomthm}
Let $X$ be an initially $\kappa$-compact power-homogeneous regular space such that $F(X) \cdot wL_c(X) \leq \kappa$. Then $|X| \leq 2^{\kappa}$.
\end{theorem}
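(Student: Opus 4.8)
The plan is to combine the two key tools already assembled in this section: Lemma~\ref{lemdensity}, which bounds the density $d(X) \leq 2^\kappa$, together with Ridderbos' inequality (Lemma~\ref{lemhom}) $|X| \leq (d(X))^{\pi\chi(X)}$ for power-homogeneous Hausdorff spaces. The whole strategy reduces $|X|$ to a computation in the two quantities $d(X)$ and $\pi\chi(X)$, each of which has been controlled separately under exactly the hypotheses at hand.

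First I would observe that the hypotheses $F(X) \cdot wL_c(X) \leq \kappa$ give in particular $F(X) \leq \kappa$, so that Lemma~\ref{lempichi} applies to the initially $\kappa$-compact space $X$ and yields $\pi\chi(X) \leq \kappa$. Next I would invoke Lemma~\ref{lemdensity}, whose hypotheses are identical to those of the present theorem (initially $\kappa$-compact, regular, power-homogeneous, $F(X) \cdot wL_c(X) \leq \kappa$), to conclude $d(X) \leq 2^\kappa$. Finally I would feed both bounds into Lemma~\ref{lemhom}:
\begin{equation*}
|X| \leq (d(X))^{\pi\chi(X)} \leq (2^\kappa)^\kappa = 2^{\kappa\cdot\kappa} = 2^\kappa.
\end{equation*}
Since a regular Hausdorff power-homogeneous space is in particular Hausdorff, the hypothesis of Lemma~\ref{lemhom} is met, and the chain of inequalities closes the argument.

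I do not anticipate a genuine obstacle here: the theorem is essentially a corollary packaging Lemma~\ref{lemdensity} with the cited results, and the cardinal arithmetic $(2^\kappa)^\kappa = 2^\kappa$ is immediate. The only point requiring a moment's care is making sure the several lemmas are applied under compatible hypotheses---in particular that ``power-homogeneous'' is used consistently (Lemma~\ref{lemhom} and Corollary~\ref{phcor} both require it) and that regularity is available where Lemma~\ref{lemdensity} needs it for the step $w(G_x) \leq 2^{d(G_x)}$. All of these are supplied directly by the theorem's hypotheses, so the proof should be short.
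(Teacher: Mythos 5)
Your proposal is correct and follows exactly the paper's own argument: Lemma~\ref{lempichi} for $\pi\chi(X) \leq \kappa$, Lemma~\ref{lemdensity} for $d(X) \leq 2^\kappa$, and Lemma~\ref{lemhom} to conclude $|X| \leq d(X)^{\pi\chi(X)} \leq (2^\kappa)^\kappa = 2^\kappa$. The theorem is indeed just this packaging of the three lemmas, so there is nothing to add.
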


\begin{proof}
By Lemma $\ref{lempichi}$ we have $\pi \chi(X) \leq \kappa$ and using Lemma $\ref{lemdensity}$ we obtain that $d(X) \leq 2^\kappa$. Hence, using Lemma $\ref{lemhom}$, we obtain $|X| \leq d(X)^{\pi \chi(X)} \leq 2^\kappa$.
\end{proof}

\begin{corollary}
(De la Vega) Let $X$ be a compact homogeneous space. Then $|X| \leq 2^{t(X)}$.
\end{corollary}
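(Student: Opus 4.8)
The plan is to obtain De la Vega's theorem as an immediate instance of Theorem \ref{mainhomthm} with the choice $\kappa := t(X)$. Accordingly, I would take a compact homogeneous space $X$, set $\kappa = t(X)$, and verify that $X$ satisfies all four hypotheses of that theorem for this $\kappa$; the conclusion $|X| \leq 2^\kappa = 2^{t(X)}$ then follows at once.

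First I would dispatch the three structural hypotheses, all of which come essentially for free. Since $X$ is compact Hausdorff it is normal, and in particular regular. Compactness also guarantees that $X$ is initially $\lambda$-compact for every cardinal $\lambda$, in particular for $\lambda = \kappa$: any open cover (of any size) already has a finite subcover. Finally, a homogeneous space is trivially power-homogeneous, so the power-homogeneity hypothesis holds as well.

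The one hypothesis requiring genuine verification is the cardinal inequality $F(X) \cdot wL_c(X) \leq \kappa$. For the weak Lindel\"of factor, compactness gives $L(X) = \aleph_0$, whence $wL(X) \leq \aleph_0$; since $X$ is normal we have $wL_c(X) = wL(X) \leq \aleph_0 \leq t(X) = \kappa$, using that tightness is always at least $\aleph_0$. For the free-sequence factor I would invoke the classical identity, recalled in the introduction, that $F(X) = t(X)$ for compact spaces; this gives $F(X) = \kappa$. Combining the two factors yields $F(X) \cdot wL_c(X) = \kappa \cdot \aleph_0 = \kappa$, so the hypothesis is met.

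With every hypothesis checked, Theorem \ref{mainhomthm} delivers $|X| \leq 2^\kappa = 2^{t(X)}$, which is exactly the statement. I do not expect a real obstacle here: the corollary is engineered so that the substantive work already resides inside Theorem \ref{mainhomthm} and Lemma \ref{lemdensity}, and the only input that is not a direct unwinding of definitions is the equality $F(X) = t(X)$ for compacta, a standard theorem of Arhangel'skii that the paper already takes for granted.
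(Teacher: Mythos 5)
Your proof is correct and is precisely the derivation the paper intends: the corollary is stated without proof as an immediate instance of Theorem \ref{mainhomthm} with $\kappa = t(X)$, using exactly the facts you cite (compact $\Rightarrow$ initially $\kappa$-compact, regular, and weakly Lindel\"of; homogeneous $\Rightarrow$ power-homogeneous; and $F(X)=t(X)$ for compacta, which the paper itself recalls). The only stylistic difference is that you route $wL_c(X)\leq\aleph_0$ through normality and $wL_c=wL$, whereas one can get it directly from compactness of closed subspaces; this is immaterial.
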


\begin{corollary}
(Arhangel'skii, van Mill and Ridderbos) Let $X$ be a compact power-homogeneous space. Then $|X| \leq 2^{t(X)}$.
\end{corollary}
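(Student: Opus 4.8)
The plan is to obtain this corollary as an immediate specialization of Theorem \ref{mainhomthm}, taking $\kappa = t(X)$ and verifying each of its hypotheses in turn. First I would observe that since $X$ is compact it is initially $\kappa$-compact for every cardinal $\kappa$, and in particular for $\kappa = t(X)$. Compactness together with the Hausdorff assumption also yields regularity (indeed normality), so the regularity requirement of Theorem \ref{mainhomthm} is automatic. Power-homogeneity is assumed outright.

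The substantive step is to check the inequality $F(X) \cdot wL_c(X) \leq \kappa$. For the first factor I would invoke the classical fact, already noted in the introduction, that for a compact space the supremum of the cardinalities of free sequences equals the tightness, i.e. $F(X) = t(X) = \kappa$. For the second factor, compactness gives $L(X) = \aleph_0$, whence $wL(X) \leq L(X) \leq \aleph_0 \leq \kappa$; and since a compact Hausdorff space is normal, we have $wL_c(X) = wL(X) \leq \kappa$ by the remark that $wL_c$ and $wL$ coincide for normal spaces. Combining these, $F(X) \cdot wL_c(X) = \kappa \cdot \aleph_0 = \kappa$, as required (the finite-tightness case being trivial).

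Having verified all the hypotheses, I would then apply Theorem \ref{mainhomthm} with this choice of $\kappa$ to conclude $|X| \leq 2^\kappa = 2^{t(X)}$.

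The only genuine obstacle is the identity $F(X) = t(X)$ for compacta, which is the one place where compactness is essentially used and which rests on Arhangel'skii's free-sequence characterization of tightness; everything else is a routine unwinding of the definitions of compactness, normality, and the (weak) Lindel\"of numbers. Note also that the preceding De la Vega corollary follows from this one, since a homogeneous space is trivially power-homogeneous.
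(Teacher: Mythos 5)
Your proposal is correct and is precisely the derivation the paper intends: the corollary is stated without proof immediately after Theorem \ref{mainhomthm}, to be read off by setting $\kappa = t(X)$ and using that a compact Hausdorff space is regular, initially $\kappa$-compact for all $\kappa$, satisfies $F(X)=t(X)$, and has $wL_c(X)=\aleph_0$. Your verification of each hypothesis (including the reduction of $wL_c$ via normality, though one could note even more directly that closed subsets of compacta are compact) matches the paper's implicit argument.
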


Note that, while $F(X)=t(X)$ for every compact space $X$, the cardinal invariants $F(X)$ and $t(X)$ are not related for initially $\kappa$-compact spaces, as the following pair of examples shows. The first example exploits an idea from \cite{Ok}.

\begin{example}
For every cardinal $\kappa>\omega_1$, there is a countably compact space $X$ such that $F(X) \leq \omega_1$ and $t(X)=\kappa$.
\end{example}

\begin{proof}
Let $Y=\{x \in 2^\kappa: |x^{-1}(1)| \leq \aleph_0\}$ and $X=Y \cup \{\mathbf{1}\}$, with the topology inherited from $2^\kappa$, where $\mathbf{1}$ indicates the function which is identically equal to $1$. Then $X$ is countably compact and it is easy to see that $t(X)=\kappa$.

Since $Y$ is Fr\'echet-Urysohn, we have $t(Y) = \aleph_0$. It is not too hard to see that $L(Y)=\aleph_1$ (see, for example, the proof of Theorem 3.7 from \cite{SWhyburn}). If $F$ is a free sequence in $X$, then $F \cap Y$ is a free sequence in $Y$ having the same cardinality. Therefore $F(X) \leq F(Y) \leq t(Y) \cdot L(Y) \leq \aleph_1$.
\end{proof}

\begin{example}
For every cardinal $\kappa$ of uncountable cofinality, there is a countably compact space such that $t(X)=\omega$ and $F(X)=\kappa$.
\end{example}

\begin{proof}
Let $X=\{\alpha < \kappa: cf(\alpha) \leq \omega \}$. Then $X$ is first-countable and hence it has countable tightness. It is also easily seen to be countably compact.

Let $F=\{x_\alpha: \alpha < \kappa \}$ be an increasing enumeration of $Succ(\kappa)$. Then, for every $\beta < \kappa$ we have $\overline{\{x_\alpha: \alpha < \beta\}} \subset [0, x_\beta)$ and $\overline{\{x_\alpha: \alpha \geq \beta \}} \subset [x_\beta, \kappa)$. Hence $F$ is a free sequence of cardinality $\kappa$.

\end{proof}

\section{Acknowledgements}

The first-named author is grateful to FAPESP for financial support through postdoctoral grant 2013/14640-1, \emph{Discrete sets and cardinal invariants in set-theoretic topology} and to Ofelia Alas for useful discussion. The second-named author acknowledges support from NSERC grant 238944.

\end{document}